\theoremstyle{plain}
\newtheorem{theorem}{Theorem}[section]
\newtheorem{thm}[theorem]{Theorem}
\newtheorem{cor}[theorem]{Corollary}
\newtheorem{prop}[theorem]{Proposition}
\newtheorem{lem}[theorem]{Lemma}
\newtheorem{obs}[theorem]{Observation}
\theoremstyle{definition}
\newtheorem{rem}[theorem]{Remark}
\newtheorem{defn}[theorem]{Definition}
\newtheorem{eg}[theorem]{Example}
\DeclareMathAlphabet{\mathpzc}{OT1}{pzc}{m}{it}
\newcommand{\bF}{{\mathbb{F}}}
\newcommand{\bN}{{\mathbb{N}}}
\newcommand{\bZ}{{\mathbb{Z}}}
\newcommand{\bn}{{\mathbf{n}}}
\newcommand{\bs}{{\mathbf{s}}}
\newcommand{\bt}{{\mathbf{t}}}
  \newcommand{\G}{{\mathcal{G}}}
  \newcommand{\N}{{\mathcal{N}}}
\renewcommand{\O}{{\mathcal{O}}}
\renewcommand{\P}{{\mathcal{P}}}
  \newcommand{\R}{{\mathcal{R}}}
\renewcommand{\S}{{\mathcal{S}}}
\newcommand{\upchi}{{\raise.35ex\hbox{\ensuremath{\chi}}}}
\newcommand{\qforal}{\quad\text{for all}\quad}
\newcommand{\mini}{\text{min}}
\newcommand{\Aut}{\operatorname{Aut}}
\newcommand{\id}{{\operatorname{id}}}
\newcommand{\Per}{\operatorname{Per}}
\newcommand{\Sub}{\operatorname{Sub}}
\newcommand{\fix}{\operatorname{Fix}}
\newcommand{\Lht}{\operatorname{\Lambda_{ht}}}
\newcommand{\DLht}{\operatorname{\Lambda_{ht}^\prime}}
\newcommand{\ca}{\mathrm{C}^*}
\newcommand{\Fth}{\mathbb{F}_\theta^+}
\newcommand{\mt}{\varnothing}
\begin{document}
\title[Higman-Thompson Like Groups of $k$-Graph C*-Algebras]{Higman-Thompson Like Groups \\ of Higher Rank Graph C*-Algebras}
\author[D. Yang]{Dilian Yang}
\address{Dilian Yang,
Department of Mathematics $\&$ Statistics, University of Windsor, Windsor, ON
N9B 3P4, CANADA}
\email{dyang@uwindsor.ca}

\thanks{The author was partially supported by an NSERC Discovery Grant in Canada.}

\begin{abstract}
Let $\Lambda$ be a row-finite and source-free higher rank graph with finitely many vertices. In this paper, we define the Higman-Thompson like group $\Lht$ of 
the graph C*-algebra $\mathcal{O}_\Lambda$ to be a special subgroup of 
the unitary group in $\mathcal{O}_\Lambda$. It is shown that $\Lht$ is closely related to the topological full groups of the groupoid associated with $\Lambda$. 
Some properties of $\Lht$ are also investigated. 
We show that its commutator group $\DLht$ 
is simple and that $\DLht$ has only one nontrivial uniformly recurrent subgroup if $\Lambda$ is aperiodic and strongly connected. 
Furthermore, if $\Lambda$ is single-vertex, then we prove that $\Lht$ is C*-simple and also provide an explicit description on the stabilizer 
uniformly recurrent subgroup of $\Lht$ under a natural action on the infinite path space of $\Lambda$.

\end{abstract}

\subjclass[2010]{46L05; 20F99} 
\keywords{Higman-Thompson like group; Higher rank graph algebra; Topological full group; Uniformly recurrent subgroup; C*-simplicity.}

\date{}
\maketitle

\section{Introduction}

The Thompson group $V_2$ has been extensively studied since its invention by Richard Thompson in \cite{Tho60} as 
an example of finitely presented infinite simple groups (see also \cite{CFP96, Tho80}). 
Later it is first generalized to $V_{n,r}$ by Higman in \cite{Hig74} (with $V_{2,1}=V_2$). The group $V_{n,r}$ is now known as the Higman-Thompson group. Recently, some higher dimensional analogues have been studied. See, for example, 
\cite{Bri04, LSV20, LV20}. 
In this paper, we introduce a generalization 
 by making full use of some known properties of higher rank graph C*-algebras, and study the properties.  
Our generalization embraces all mentioned above. 
To be a little more precise, for an arbitrary row-finite and source-free higher rank graph $\Lambda$ with finitely many vertices, we associate it a Higman-Thompson like group, 
denoted as $\Lht$, 
which consists of
some special unitaries of the graph C*-algebra $\O_\Lambda$ of $\Lambda$. It turns out that $\Lht$ is isomorphic to the topological full group $F(\G_\Lambda)$
of the groupoid $\G_\Lambda$ associated to $\Lambda$ defined by Nekrashevych  in \cite{Nek19}. Moreover, there is a group homomorphism from 
$\Lht(\cong F(\G_\Lambda))$ to the topological full group $[[\G_\Lambda]]$ defined by Matui in \cite{Mat12, Mat15}, whose kernel is composed of those elements in $\Lht$
coming from equivalent paths of $\Lambda$. In particular, $\Lht$ is not simple if $\Lambda$ is periodic. All of these are given in Section \ref{S:HT}. Some examples are 
exhibited in Section \ref{S:eg}. As an aside, we find that the Thompson groups $V_n$ with $n$ even can be faithfully represented in the automorphism group of an abelian normal subgroup of $\Lht$,
where $\Lambda$ is a single-vertex flip higher rank graph ($k\ge 2$). This seems new in the literature.

When $\Lambda$ is aperiodic, the left regular representation of $\O_\Lambda$ induces a faithful action $\phi$ of $\Lht$ as homeomorphisms on the infinite space $\Lambda^\infty$.
In Section \ref{S:Propo}, we first prove that the commutator group $\DLht$ of $\Lht$ is simple and 
that the stabilizer uniformly recurrent subgroups (URS) is the only nontrivial URS of $\DLht$ if $\Lambda$ is strongly connected. This is heavily facilitated by some recent results in \cite{LBMB18, MB18}. When $\Lambda$ is single-vertex, 
we obtain more properties of $\Lht$:  (1) The action $\phi$ of $\Lht$ on $\Lambda^\infty$ is an extreme boundary action and its stabilizer URS is also explicitly described; 
(2) $\Lht$ is C*-simple and so has the unique trace property; 
(3) The C*-simplicity of $\Lht$ provides another characterization of the aperiodicity of $\Lambda$. 

Let us finish this section by mentioning some connections and differences between this paper and the most recent papers \cite{LSV20, LV20} 
studying the generalizations of the Thompson groups. In terms of the terminologies used here, the paper \cite{LV20} handles the Higman-Thompson like groups associated 
to a special class of aperiodic, row-finite single-vertex higher rank graphs, and our Theorem \ref{T:DG} (i) below recovers the main theorem of \cite{LV20}. 
The paper \cite{LSV20} generalizes \cite{LV20} to aperiodic, cofinal, row-finite, and source-free higher rank graphs with finitely many vertices. The higher dimensional 
generalization of the Thompson group in \cite{LSV20} is defined as the topological full group of the group of units of a Boolean inverse monoid
associated to such a higher rank graph. One can see from Corollary \ref{C:3=} below that ours reconciles theirs in this case. 
The tools of \cite{LSV20, LV20} are mainly from the theory of inverse monoids. Here we take advantage of the known properties of higher rank graph C*-algebras
and take a closer look at the `analytic' properties of Higman-Thompson like groups.

\subsection*{Acknowledgments}
The author is grateful to Matthew Kennedy, Hui Li, and Brita Nucinkis for some helpful discussions.

\section{Prelimiaries} 

This section provides some necessary backgrounds on groupoids, topological full groups of groupoids, higher rank graph algebras and uniformly recurrent subgroups. 
The main resources are from \cite{DY09, KP00, LBMB18, Mat15, Nek19, Rae05, Ren80}. 

\subsection{Groupoids}
A groupoid $\G$ is a small category with inverse. Let $\G^{(0)}$ be the unit space of $\G$;
the mappings $s,r: \G\to \G^{(0)}$ be the source and range maps, respectively; and $\G^{(2)}$ be the set of all comparable pairs in $\G$: 
$\G^{(2)}:=\{(\gamma, \omega)\in \G\times\G: s(\gamma)=r(\omega)\}$. 
A topological groupoid is a groupoid $\G$ equipped with a locally compact topology such that both composition and inversion maps are continuous
and $\G^{(0)}\subseteq \G$ is Hausdorff in the relative topology. 

A subset $U(\subset \G)$ is called a bisection (or $\G$-set) if $r|_U$ and $s|_U$ are injective. An open bisection $U$ of $\G$ induces 
a homeomorphism 
\[
\pi_U: s(U)\to r(U), \  \pi_U:=r\circ (s|_U)^{-1}.
\]
A topological groupoid $\G$ is called \'etale if the range map $r: \G\to \G$ is a local homeomorphism. If $\G$ is \'etale, then has a basis 
consisting of open bisections. 
A groupoid $\G$ is ample if it is \'etale and it has a basis of compact open bisections. 

For $x\in \G^{(0)}$, the isotropy group at $x$ is $x\G x:=\{\gamma\in\G: s(\gamma)=r(\gamma)=x\}$. The isotropy subgroupoid is 
the set $\G':=\cup_{x\in \G^{(0)}} x\G x$. A topological groupoid $\G$ is topologically principal
if $\{x\in \G^{(0)}: x\G x=x\}$ 
is dense in $\G^{(0)}$; it is effective if the interior of $\G'$ is $\G^{(0)}$. If $\G$ is second countable, these two notions are equivalent. 
For $x\in \G^{(0)}$, the set $\G(x):=r(\G x)$ is called the $\G$-orbit of $x$. If every $\G$-orbit is dense in $\G^{(0)}$, $\G$ is said to be minimal. 

\subsection{Topological full groups of groupoids}

Let $\G$ be an \'etale groupoid. Let $B^{\text{co}}(\G)$ be the set of all compact open bisections of $\G$. An element 
$U\in B^{\text{co}}(\G)$ is said to be full if $s(U)=r(U)=\G^{(0)}$.
In \cite{Mat12} Matui defines 
\[
[[\G]]:=\{\pi_U:U\in B^{\text{co}}(\G)\text{ and }U\text{ is full}\}.
\]
He calls $[[\G]]$ the topological full group of $\G$. 
Later, 
Nekrashevych proposes another one in \cite{Nek19} as follows:
\[
F(\G)=\{U\in B^{\text{co}}(\G): U\text{ is full}\}.
\]
It is easy to see that 
\begin{align}
\label{E:theta}
\pi: F(\G)\to [[\G]],\ U\mapsto \pi_U
\end{align}
is a surjective homomorphism, and that $\pi$ is also injective (and so isomorphic) if $\G$ is topologically principal. 

\subsection{Higher rank graph algebras}

Let $1\le k\in \bN$. A higher rank graph (aka $k$-graph or rank-$k$ graph) is a countable small category such that there exists a functor $d:\Lambda \to \mathbb{N}^k$ satisfying the factorization property: for $\mu\in\Lambda, n,m \in \mathbb{N}^k$ with $d(\mu)=n+m$, there exist unique $\alpha,\beta \in \Lambda$ such that  $\mu=\alpha\beta$ with $d(\alpha)=n$, $d(\beta)=m$ and $s(\alpha)=r(\beta)$. Let $(\Lambda_1,d_1)$ and $(\Lambda_2,d_2)$ be two $k$-graphs. A functor $f:\Lambda_1 \to \Lambda_2$ is called a graph morphism if $d_2 \circ f=d_1$.

Let $\Lambda$ be a $k$-graph. For $n\in \bN^k$, let $\Lambda^n:=d^{-1}(n)$.
$\Lambda$ is said to be row-finite (resp.~source-free) if $\vert v\Lambda^{n}\vert<\infty$
(resp.~  $v\Lambda^{n} \neq \mt$ ) for all $v \in \Lambda^0$ and $n \in \mathbb{N}^k$. 
For $\mu,\nu \in \Lambda$, define $\Lambda^{\min}(\mu,\nu):=\{(\alpha,\beta) \in \Lambda \times \Lambda:\mu\alpha=\nu\beta,d(\mu\alpha)=d(\mu)\lor d(\nu)\}$.

Throughout this paper, 
\textsf{all higher rank graphs $\Lambda$ are assumed to be row-finite, source-free, 
and have finitely many vertices.}

The following $k$-graph plays an important role later. 
Let $\Omega_k:=\{(p,q) \in \mathbb{N}^k \times \mathbb{N}^k:p \leq q\}$. For $(p,q), (q,m) \in \Omega_k$, define $(p,q) \cdot (q,m):=(p,m)$, $r(p,q):=(p,p)$, $s(p,q):=(q,q)$, and $d(p,q):=q-p$. Then $(\Omega_k,d)$ is a row-finite source-free $k$-graph. 

\begin{defn}
Let $\Lambda$ be a $k$-graph. Then a family of partial isometries $\{S_\mu\}_{\mu \in \Lambda}$ in a C*-algebra $B$ is called a \emph{Cuntz-Krieger $\Lambda$-family} if
\begin{enumerate}
\item $\{S_v\}_{v \in \Lambda^0}$ is a family of mutually orthogonal projections;
\item $S_{\mu\nu}=S_{\mu} S_{\nu}$ if $s(\mu)=r(\nu)$;
\item $S_{\mu}^* S_{\mu}=S_{s(\mu)}$ for all $\mu \in \Lambda$; and
\item $S_v=\sum_{\mu \in v \Lambda^{n}}S_\mu S_\mu^*$ for all $v \in \Lambda^0, n \in \mathbb{N}^k$.
\end{enumerate}
The C*-algebra $\mathcal{O}_\Lambda$ generated by a universal Cuntz-Krieger $\Lambda$-family $\{s_\mu\}_{\mu \in \Lambda}$ is called the \emph{$k$-graph C*-algebra} of $\Lambda$.
\end{defn}

Let $\Lambda$ be a $k$-graph. Then a graph morphism from $\Omega_k$ to $\Lambda$ is called an infinite path of $\Lambda$. Let $\Lambda^\infty$
denote the set of all infinite paths of $\Lambda$. A $k$-graph $\Lambda$ is called cofinal if, for every $v\in\Lambda^0$ and $x\in \Lambda^\infty$, 
there is $\lambda\in\Lambda$ and $n\in \bN^k$ such that $s(\lambda)=x(0,n)$ and 
$r(\lambda)=v$. 

For $\mu \in \Lambda$,  define the cylinder $Z(\mu):=\{\mu x: x \in \Lambda^\infty,s(\mu)=x(0,0)\}$. Endow $\Lambda^\infty$ with the topology generated by the basic open sets $\{Z(\mu):\mu \in \Lambda\}$. Each $Z(\mu)$ is compact, and $\Lambda^\infty$ is second countable, locally compact, Hausdorff, and totally disconnected. Also $\Lambda^\infty$ is compact as  
$\Lambda^0$ is assumed to be finite. 

Recall from \cite{CKSS14, DY09, Yang15} that there is one equivalence relation $\sim$ on $\Lambda$: 
\[
\mu\sim\nu\in\Lambda \text{ if  } s(\mu)=s(\nu) \text{ and  } \mu x = \nu x \text{ for all }x\in s(\mu)\Lambda^\infty.
\]
When $\Lambda$ has a single vertex, this equivalence yields a periodicity group $\Per\Lambda$ of $\Lambda$. 
We should mention that $\Per\Lambda$ is a subgroup of $\bZ^k$ and that $\Lambda/\!\!\sim$ is a $\textsf{q}(\bN^k)$-graph, where $\textsf{q}$ is the 
natural quotient map $\textsf{q}: \bZ^k\to \bZ^k/\Per\Lambda$. It is worth noticing that this is actually true in a more general setting (\cite{CKSS14, Yang15}).

Finally, we define the path groupoid of $\Lambda$ by
\begin{align*}
\mathcal{G}_\Lambda:=\{(\mu x,d(\mu)-d(\nu),\nu x) \in \Lambda^\infty \times \mathbb{Z}^k \times \Lambda^\infty: s(\mu)=s(\nu),x \in s(\mu)\Lambda^\infty\}.
\end{align*}
For $\mu,\nu \in \Lambda$ with $s(\mu)=s(\nu)$, let $Z(\mu,\nu):=\{(\mu x,d(\mu)-d(\nu),\nu x):x \in s(\mu)\Lambda^\infty\}$. Endow $\mathcal{G}_\Lambda$ with the topology generated by the basic open sets $Z(\mu,\nu)$.
Then $\mathcal{G}_\Lambda$ is an ample groupoid and each $Z(\mu,\nu)$ is a compact open bisection.
It is from \cite{KP00} that $\O_\Lambda$ is isomorphic to the groupoid C*-algebra $\ca(\G_\Lambda)$. 

\subsection{Uniformly recurrent subgroups (URS)}
 Let $G$ be a countable group, and $\Sub(G)$ be the set of all subgroups of $G$. When viewed as a subset of 
$\{0,1\}^G$, the set $\Sub(G)$ is closed for the product topology. The topology induced on $\Sub(G)$ by the product topology is called the 
Chabauty topology, and it makes $\Sub(G)$ a compact space. The action of $G$ on itself by conjugation naturally extends to an action of $G$ on $\Sub(G)$ by homeomorphisms. A uniformly recurrent subgroup (URS) is a closed minimal $G$-invariant subset of $\Sub(G)$. 

Let $G$ be a faithful and minimal
action by homeomorphisms on a compact space $X$. Then this action induces a URS known as the stabilizer URS
of the action $G$ on $X$, which is denoted by $\S_G(X)$ (see \cite{GW15,LBMB18} for more information). 
For $x\in X$, we write $G_x$ as the stabilizer of $x$, and $G_x^0$ as the normal subgroup of $G_x$ which consists of all elements 
$g\in G_x$ such that $g$ fixes pointwise a neighbourhood of $x$.

\section{Higman-Thompson like groups associated to higher rank graphs}
\label{S:HT}

Let $\Lambda$ be a higher rank graph. Let $\G_\Lambda$ be the groupoid associated to $\Lambda$. 
One can check that 
\begin{align}
\label{E:FGLambda}
F(\G_\Lambda)=\left\{U=\bigsqcup_{i=1}^n Z(\alpha_i, \beta_i):
\bigsqcup_{i=1}^n Z(\alpha_i)=\Lambda^\infty=\bigsqcup_{i=1}^n Z(\beta_i),\ n\in \bN
\right\}.
\end{align}
In this case, one can determine the kernel of the homomorphism $\pi: F(\G_\Lambda)\to [[\G_\Lambda]]$ defined in \eqref{E:theta}.

\begin{lem}
\label{L:kertheta}
Let $\pi: F(\G_\Lambda)\to [[\G_\Lambda]]$ be the epimorphism given in \eqref{E:theta}. Then 
\[
\ker\pi=\left\{\bigsqcup_{i=1}^n Z(\alpha_i, \beta_i)\in F(\G_\Lambda): \alpha_i\sim \beta_i \text{ for all }1\le i\le n\right\}. 
\]
\end{lem}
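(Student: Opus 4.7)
The plan is to unwind the definitions. Fix $U=\bigsqcup_{i=1}^n Z(\alpha_i,\beta_i)\in F(\G_\Lambda)$ as described in \eqref{E:FGLambda}; in particular $s(\alpha_i)=s(\beta_i)$ for every $i$, and both $\{Z(\beta_i)\}_{i=1}^n$ and $\{Z(\alpha_i)\}_{i=1}^n$ are partitions of $\Lambda^\infty$. For $y\in Z(\beta_i)$, write uniquely $y=\beta_i x$ with $x\in s(\beta_i)\Lambda^\infty$; since $U$ is a bisection, the unique element of $U$ with source $y$ is $(\alpha_i x,\, d(\alpha_i)-d(\beta_i),\, \beta_i x)\in Z(\alpha_i,\beta_i)$. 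Consequently
\[
\pi_U(\beta_i x)=\alpha_i x \qquad (1\le i\le n,\ x\in s(\beta_i)\Lambda^\infty).
\]

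Now $U\in\ker\pi$ if and only if $\pi_U=\id_{\Lambda^\infty}$, and because $\Lambda^\infty=\bigsqcup_{i=1}^n Z(\beta_i)$, this reduces to the piecewise condition $\pi_U|_{Z(\beta_i)}=\id$ for each $i$, i.e.\ $\alpha_i x=\beta_i x$ for all $x\in s(\beta_i)\Lambda^\infty$. Together with $s(\alpha_i)=s(\beta_i)$ (which is automatic), this is precisely the defining condition of $\alpha_i\sim\beta_i$ recalled in the preliminaries, yielding the claimed description of $\ker\pi$. The argument is a direct chase through the definitions; the only point to watch is the explicit description of $\pi_U$, which relies on the fullness of $U$ so that every point of $\Lambda^\infty$ has a unique $s$-preimage in $U$ lying in a single $Z(\alpha_i,\beta_i)$. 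I do not anticipate any genuine obstacle.
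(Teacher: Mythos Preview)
Your proof is correct and follows essentially the same approach as the paper: both arguments reduce $\pi_U=\id_{\Lambda^\infty}$ to the piecewise condition $\alpha_i x=\beta_i x$ for all $x\in s(\beta_i)\Lambda^\infty$, which is precisely the definition of $\alpha_i\sim\beta_i$. Your version simply spells out the computation of $\pi_U$ in slightly more detail.
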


\begin{proof}
The proof follows from the calculations below:
\begin{align*}
&\pi\left(\bigsqcup_{i=1}^n Z(\alpha_i, \beta_i)\right)=\id_{\Lambda^\infty}\\
\iff &\alpha_i x = \beta_i x \text{ for all }x\in s(\alpha_i)\Lambda^\infty(=s(\beta_i)\Lambda^\infty)\quad (1\le i\le n)\\
\iff&\alpha_i\sim \beta_i \quad (1\le i\le n).
\end{align*}
We are done. 
\end{proof}

Notice that $\O_\Lambda$ is unital as $\Lambda^0$ is finite. 
It turns out to be very convenient to connect the topological full group $F(\G)$ with the unitaries of the graph C*-algebra $\O_\Lambda$ of $\Lambda$. 
We need the following lemma first.

\begin{lem}
\label{L:U}
Let $\alpha_i, \beta_i\in \Lambda$ with $s(\alpha_i)=s(\beta_i)$ ($1\le i\le n$). Then $\sum_{i=1}^n s_{\alpha_i} s_{\beta_i}^*$ 
is a unitary in $\O_\Lambda$ if and only if 
\begin{align}
\label{E:CK}
\sum_{i=1}^n s_{\alpha_i} s_{\alpha_i}^*= I=\sum_{i=1}^n s_{\beta_i} s_{\beta_i}^*.
\end{align}
\end{lem}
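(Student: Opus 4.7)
Proof plan: I will prove the two directions separately, the forward implication being the substantive one.

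For the $(\Leftarrow)$ direction, I plan a direct algebraic computation. The hypothesis \eqref{E:CK} presents each of $\{s_{\alpha_i}s_{\alpha_i}^*\}_i$ and $\{s_{\beta_i}s_{\beta_i}^*\}_i$ as a family of projections whose sum is a projection, so each family must be pairwise orthogonal. Multiplying $s_{\alpha_i}s_{\alpha_i}^*\cdot s_{\alpha_j}s_{\alpha_j}^*=0$ on the right by $s_{\alpha_i}$ gives $s_{\alpha_j}s_{\alpha_j}^*s_{\alpha_i}=0$ for $i\neq j$; then
\[
(s_{\alpha_i}^*s_{\alpha_j})(s_{\alpha_i}^*s_{\alpha_j})^*=s_{\alpha_i}^*(s_{\alpha_j}s_{\alpha_j}^*s_{\alpha_i})=0
\]
forces $s_{\alpha_i}^*s_{\alpha_j}=\delta_{ij}s_{s(\alpha_i)}$. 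The same reasoning on the $\beta$-side gives $s_{\beta_i}^*s_{\beta_j}=\delta_{ij}s_{s(\beta_i)}$. Using $s(\alpha_i)=s(\beta_i)$,
\[
UU^*=\sum_{i,j}s_{\alpha_i}(s_{\beta_i}^*s_{\beta_j})s_{\alpha_j}^*=\sum_i s_{\alpha_i}s_{s(\alpha_i)}s_{\alpha_i}^*=\sum_i s_{\alpha_i}s_{\alpha_i}^*=I,
\]
and $U^*U=I$ follows symmetrically.

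For the $(\Rightarrow)$ direction, my plan is to pass through the isomorphism $\O_\Lambda\cong \ca(\G_\Lambda)$ of \cite{KP00}. Under this identification $s_\mu s_\nu^*$ becomes the characteristic function $1_{Z(\mu,\nu)}$, so $U$ corresponds to the non-negative integer-valued function $f\colon \G_\Lambda\to\bN$ with $f(\gamma)=|\{i:\gamma\in Z(\alpha_i,\beta_i)\}|$. The convolution formula in an ample groupoid algebra gives, for any unit $x\in\G^{(0)}=\Lambda^\infty$,
\[
(UU^*)(x)=\sum_{\eta\in r^{-1}(x)}f(\eta)\,\overline{f(\eta)}=\sum_{\eta\in r^{-1}(x)}f(\eta)^2,
\]
because for $x$ a unit one has $x^{-1}\eta=\eta$ whenever $r(\eta)=x$. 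Since $UU^*=I=1_{\G^{(0)}}$, this sum of squares of non-negative integers equals $1$ for every $x$, so $f$ takes only the values $0,1$ (forcing the $Z(\alpha_i,\beta_i)$ to be pairwise disjoint) and exactly one $\eta\in r^{-1}(x)$ realizes the value $1$.

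Setting $V:=\bigsqcup_i Z(\alpha_i,\beta_i)$, the latter conclusion says $r\colon V\to \Lambda^\infty$ is a bijection, so the ranges $Z(\alpha_i)=r(Z(\alpha_i,\beta_i))$ partition $\Lambda^\infty$; translating $1_{Z(\alpha_i)}\leftrightarrow s_{\alpha_i}s_{\alpha_i}^*$ back into $\O_\Lambda$ this reads $\sum_i s_{\alpha_i}s_{\alpha_i}^*=I$. Applying the identical argument to $U^*U=I$ using $s|_V$ in place of $r|_V$ delivers $\sum_i s_{\beta_i}s_{\beta_i}^*=I$. The main technical point is the convolution calculation on $\G^{(0)}$ and reading off disjointness from it; everything else is routine bookkeeping inside $\ca(\G_\Lambda)$.
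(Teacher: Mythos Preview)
Your $(\Leftarrow)$ direction is essentially the paper's argument: both extract $s_{\alpha_i}^*s_{\alpha_j}=0$ for $i\ne j$ from the projection identity \eqref{E:CK} and then compute $UU^*$ and $U^*U$ directly.

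For $(\Rightarrow)$ your route is genuinely different. The paper stays entirely inside $\O_\Lambda$: it uses the Cuntz--Krieger relations to rewrite $U$ so that all the $\beta$-words carry the common degree $m=\bigvee_i d(\beta_i)$, after which $s_{\beta_i\gamma}^*s_{\beta_{i'}\gamma'}$ becomes a Kronecker delta and $UU^*=I$ collapses algebraically to the desired projection sum. Your argument instead passes to the groupoid model and reduces everything to the pointwise identity $(f*f^*)(x)=\sum_{r(\eta)=x}f(\eta)^2=1$ with $f$ taking values in $\bN$, which immediately forces both the disjointness of the $Z(\alpha_i,\beta_i)$ and the bijectivity of $r|_V$. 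This is clean and sidesteps the degree-alignment bookkeeping; the trade-off is that you are implicitly using that two elements of $C_c(\G_\Lambda)$ which agree in $\ca(\G_\Lambda)$ must agree as functions. That holds here because $\G_\Lambda$ is amenable (so the full and reduced algebras coincide) and the canonical map $C_c(\G)\hookrightarrow \ca_r(\G)$ is injective for any Hausdorff \'etale groupoid, but a sentence to that effect would tighten the argument.
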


\begin{proof}
Set $U:=\sum_{i=1}^n s_{\alpha_i} s_{\beta_i}^*$. 

 ``If": First notice that the identities in \eqref{E:CK} imply that $s_{\alpha_i}^* s_{\alpha_j}=0=s_{\beta_i}^* s_{\beta_j}$ for $i\ne j$. 
Indeed, 
\begin{align*}
 \sum_{i=1}^n s_{\alpha_i} s_{\alpha_i}^*= I
&\implies (\sum_{i=1}^n s_{\alpha_i} s_{\alpha_i}^*)s_{\alpha_j}= s_{\alpha_j}\\
&\implies (\sum_{i\ne j} s_{\alpha_i} s_{\alpha_i}^*)s_{\alpha_j}= 0\ (\text{as }s_{\alpha_j} \text{ is a partial isometry})\\
&\implies s_{\alpha_j}^*(\sum_{i\ne j} s_{\alpha_i} s_{\alpha_i}^*)s_{\alpha_j}= 0\\
&\implies \sum_{i\ne j} (s_{\alpha_i}^*s_{\alpha_j})^*(s_{\alpha_i}^*s_{\alpha_j})= 0\\
&\implies s_{\alpha_i}^*s_{\alpha_j}=0\quad \text{ if $i\ne j$}.
\end{align*}
Similarly, one has $s_{\beta_i}^* s_{\beta_j}=0$ for $i\ne j$. 
 Then one has 
 \begin{align*}
 U^* U 
& =\big(\sum_{i=1}^n s_{\beta_i} s_{\alpha_i}^* \big)\big(\sum_{i=1}^n s_{\alpha_j} s_{\beta_j}^* \big)
    =\sum_{i=1}^n s_{\beta_i} s_{\alpha_i}^*s_{\alpha_i} s_{\beta_i}^*\\
& =\sum_{i=1}^n s_{\beta_i} s_{s(\alpha_i)} s_{\beta_i}^*=\sum_{i=1}^n s_{\beta_i} s_{\beta_i}^* \stackrel{\text{by }\eqref{E:CK}}= I.
 \end{align*}
 Similarly, we have $UU^*=I$. 
 
 ``Only if": Suppose that $U=\sum_{i=1}^n s_{\alpha_i} s_{\beta_i}^*$ is a unitary in $\O_\Lambda$. In order to show that $\sum_{i=1}^n s_{\alpha_i} s_{\alpha_i}^*= I$, we can assume that all $\beta_i$'s in the same degree by using the Cuntz-Krieger relations.
In fact, let $m=\vee_{i=1}^n d(\beta_i)$ and then 
 \begin{align*}
 U
 &=\sum_{i=1}^n s_{\alpha_i} \big(\sum_{\gamma_{i_j} \in s(\alpha_i)\Lambda^{m-d(\beta_i)}} s_{\gamma_{i_j}} s_{\gamma_{i_j}}^*\big) s_{\beta_i}^*\\
 &=\sum_{i=1}^n\sum_{i, \, \gamma_{i_j} \in s(\alpha_i)\Lambda^{m-d(\beta_i)}} s_{\alpha_i\gamma_{i_j}}  s_{\beta_i\gamma_{i_j}}^*. 
 \end{align*}
 Thus 
 \begin{align*}
 UU^*=I
 &\implies \sum_{i=1}^n\sum_{\gamma_{i_j} \in s(\alpha_i)\Lambda^{m-d(\beta_i)}} s_{\alpha_i\gamma_{i_j}} s_{\alpha_i\gamma_{i_j}}^*=I\\
 &\implies \sum_{i=1}^n s_{\alpha_i} \big(\sum_{\gamma_{i_j} \in s(\alpha_i)\Lambda^{m-d(\beta_i)}} s_{\gamma_{i_j}} s_{\gamma_{i_j}}^*\big) s_{\alpha_i}^*=I\\
 &\implies \sum_{i=1}^n s_{\alpha_i} s_{\alpha_i}^*= I.
 \end{align*}
The proof of $\sum_{i=1}^n s_{\beta_i} s_{\beta_i}^*=I$ is completely similar. 
\end{proof}

\begin{rem}
In the above, the condition $s(\alpha_i)=s(\beta_i)$ is natural, and just guarantees $s_{\alpha_i} s_{\beta_i}^*\ne 0$ ($1\le i\le n$). So, under this condition,  $\sum_{i=1}^n s_{\alpha_i} s_{\beta_i}^*$ has no zero summands. 
\end{rem}


\begin{cor}
\label{C:Uht}
Let 
\[
\Lht:=\left\{U=\sum_{i=1}^n s_{\alpha_i} s_{\beta_i}^*\in \O_\Lambda: U \text{ is unitary}\right\}.
\] 
Then $\Lht$ is a group in $\O_\Lambda$. 
\end{cor}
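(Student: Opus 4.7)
To verify that $\Lht$ is a group, I need to check three things: it contains the identity of $\O_\Lambda$, it is closed under taking adjoints (which equal inverses for unitaries), and it is closed under products. The first two are essentially formal; the real content is closure under products.

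\textbf{Identity.} Because $\Lambda^0$ is finite, the Cuntz-Krieger relation gives $I_{\O_\Lambda}=\sum_{v\in\Lambda^0}s_v s_v^*$, which is a sum $\sum s_{\alpha_i}s_{\beta_i}^*$ with $\alpha_i=\beta_i=v_i$ (vertices), and is manifestly unitary, so $I\in\Lht$.

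\textbf{Adjoints.} If $U=\sum_{i=1}^n s_{\alpha_i}s_{\beta_i}^*$ is unitary then $U^{-1}=U^*=\sum_{i=1}^n s_{\beta_i}s_{\alpha_i}^*$, which has exactly the same shape and is of course unitary, so $U^*\in\Lht$.

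\textbf{Products.} The strategy is to take $U=\sum_i s_{\alpha_i}s_{\beta_i}^*$ and $V=\sum_j s_{\gamma_j}s_{\delta_j}^*$ in $\Lht$ and expand $UV$ into the required shape. The key tool is the standard $k$-graph identity
\[
s_{\beta}^* s_{\gamma}=\sum_{(\eta,\zeta)\in\Lambda^{\min}(\beta,\gamma)}s_\eta s_\zeta^*,
\]
which follows from the factorization property together with the Cuntz-Krieger relations. Plugging this into $UV=\sum_{i,j}s_{\alpha_i}(s_{\beta_i}^*s_{\gamma_j})s_{\delta_j}^*$ and using $s_{\alpha_i}s_\eta=s_{\alpha_i\eta}$ (since the definition of $\Lambda^{\min}$ forces $r(\eta)=s(\beta_i)=s(\alpha_i)$, and similarly for the other side), I obtain
\[
UV=\sum_{i,j}\sum_{(\eta,\zeta)\in\Lambda^{\min}(\beta_i,\gamma_j)}s_{\alpha_i\eta}\,s_{\delta_j\zeta}^*,
\]
which is exactly of the form $\sum s_\mu s_\nu^*$ with $s(\mu)=s(\nu)$ in each summand (because $\beta_i\eta=\gamma_j\zeta$ forces $s(\eta)=s(\zeta)$). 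Since $UV$ is the product of two unitaries, it is unitary, so by definition $UV\in\Lht$.

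The only mildly subtle point is the product step, but it reduces to the standard factorization identity for $s_\beta^* s_\gamma$, so no new obstacle arises; notice in particular that one does not need to reverify unitarity via Lemma \ref{L:U}, since unitarity is automatic from $U$ and $V$ being unitary. Putting the three items together, $\Lht$ is a subgroup of the unitary group of $\O_\Lambda$.
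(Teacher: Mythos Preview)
Your proof is correct and follows essentially the same approach as the paper: the paper's proof is a one-line reference to Lemma~\ref{L:U} and the identity $s_\mu^* s_\nu=\sum_{(\alpha,\beta)\in\Lambda^{\min}(\mu,\nu)}s_\alpha s_\beta^*$, and you have simply spelled out in detail how that identity handles closure under products (while also checking identity and adjoints explicitly). Your observation that unitarity of $UV$ is automatic and Lemma~\ref{L:U} is not strictly needed for that step is a fair remark.
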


\begin{proof}
This is a simple application of Lemma \ref{L:U} and the identity 
$s_\mu^* s_\nu=\sum_{(\alpha, \beta)\in \Lambda^{\mini}(\mu, \nu)}s_\alpha s_\beta^*$.
\end{proof}

We are now ready to introduce the key notion in this note. 

\begin{defn}
\label{D:HT}
The group $\Lht$ in Corollary \ref{C:Uht} is called the \textit{Higman-Thompson like group of $\O_\Lambda$}. 
\end{defn}

It should be mentioned that representing the Thompson group $V_n$ in the Cuntz algebra $\O_n$ (which is the C*-algebra of the graph with one vertex and $n$ edges) was discovered by Nekrashevych in \cite{Nek04}.

\medskip

It turns out that $\Lht$ and $F(\G_\Lambda)$ are isomorphic.

\begin{prop}
\label{P:FGLht}
Keep the notation same as above. Then $\Lht \cong F(\G_\Lambda)$. 
\end{prop}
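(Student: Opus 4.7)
The plan is to pass through the groupoid C*-algebra picture $\O_\Lambda\cong \ca(\G_\Lambda)$, under which $s_\mu s_\nu^*$ corresponds to the characteristic function $1_{Z(\mu,\nu)}$. Define a map
\[
\Phi: F(\G_\Lambda)\to \Lht,\qquad \Phi\Big(\bigsqcup_{i=1}^n Z(\alpha_i,\beta_i)\Big):=\sum_{i=1}^n s_{\alpha_i}s_{\beta_i}^{*}.
\]
Once this is seen to be a well-defined group homomorphism between the two sets, everything will follow.

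First I would check well-definedness. Two presentations $\bigsqcup_{i} Z(\alpha_i,\beta_i)=\bigsqcup_{j} Z(\alpha'_j,\beta'_j)$ of the same bisection can be refined to a common one, and the Cuntz--Krieger relation $s_{s(\alpha)}=\sum_{\gamma\in s(\alpha)\Lambda^m}s_\gamma s_\gamma^*$ gives the identity $s_\alpha s_\beta^{*}=\sum_{\gamma\in s(\alpha)\Lambda^m}s_{\alpha\gamma}s_{\beta\gamma}^{*}$, which matches the refinement $Z(\alpha,\beta)=\bigsqcup_\gamma Z(\alpha\gamma,\beta\gamma)$. That the resulting element actually lands in $\Lht$ (i.e.\ is unitary) is precisely the content of Lemma \ref{L:U} combined with the fact that fullness of the bisection, $\bigsqcup Z(\alpha_i)=\Lambda^\infty=\bigsqcup Z(\beta_i)$, translates under the groupoid-algebra identification into $\sum s_{\alpha_i}s_{\alpha_i}^{*}=I=\sum s_{\beta_i}s_{\beta_i}^{*}$.

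Next I would verify that $\Phi$ is a homomorphism. The product of bisections in $F(\G_\Lambda)$ is the groupoid product $UV=\{\gamma\omega:\gamma\in U,\omega\in V,s(\gamma)=r(\omega)\}$. On generators one computes
\[
\bigl(s_\alpha s_\beta^{*}\bigr)\bigl(s_\gamma s_\delta^{*}\bigr)=\sum_{(\sigma,\tau)\in\Lambda^{\min}(\beta,\gamma)}s_{\alpha\sigma}s_{\delta\tau}^{*},
\]
using the formula $s_\beta^{*}s_\gamma=\sum_{(\sigma,\tau)\in\Lambda^{\min}(\beta,\gamma)}s_\sigma s_\tau^{*}$ already invoked in the proof of Corollary \ref{C:Uht}. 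On the groupoid side, $Z(\alpha,\beta)Z(\gamma,\delta)=\bigsqcup_{(\sigma,\tau)\in\Lambda^{\min}(\beta,\gamma)} Z(\alpha\sigma,\delta\tau)$, giving the matching identity. Extending by linearity (and disjointness) makes $\Phi$ multiplicative.

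Finally, surjectivity is immediate from \eqref{E:FGLambda} together with Lemma \ref{L:U}: any $u=\sum s_{\alpha_i}s_{\beta_i}^{*}\in\Lht$ has $\{Z(\alpha_i)\}$ and $\{Z(\beta_i)\}$ mutually disjoint (the orthogonality of $\{s_{\alpha_i}s_{\alpha_i}^{*}\}$ and $\{s_{\beta_i}s_{\beta_i}^{*}\}$ was deduced inside the proof of Lemma \ref{L:U}), hence the $Z(\alpha_i,\beta_i)$ are disjoint and their union lies in $F(\G_\Lambda)$ with $\Phi$-image $u$. For injectivity, if $\Phi(U)=\Phi(V)$ then the corresponding characteristic functions in $\ca(\G_\Lambda)$ agree, forcing $U=V$. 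The only step requiring care is the well-definedness / refinement argument, where one must make sure that the two partitions (of $\Lambda^\infty$ by $\{Z(\alpha_i)\}$ and by $\{Z(\beta_i)\}$) can be jointly refined compatibly with the bisection structure; this is handled by passing to a common degree $m\ge d(\alpha_i)\vee d(\beta_i)$ for every $i$ and factorizing every $\alpha_i$ and $\beta_i$ over $s(\alpha_i)\Lambda^{m-d(\alpha_i)}$, after which both sides reduce to characteristic functions of the same finite disjoint union.
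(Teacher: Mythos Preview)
Your argument is correct and follows essentially the same route as the paper: both set up the bijection $\sum_i s_{\alpha_i}s_{\beta_i}^{*}\leftrightarrow \bigsqcup_i Z(\alpha_i,\beta_i)$ and rely on Lemma~\ref{L:U} together with the equivalence $\sum_i s_{\alpha_i}s_{\alpha_i}^{*}=I \Leftrightarrow \bigsqcup_i Z(\alpha_i)=\Lambda^\infty$. The only cosmetic differences are that you run the map in the opposite direction and invoke the identification $s_\mu s_\nu^{*}\leftrightarrow 1_{Z(\mu,\nu)}$ in $\ca(\G_\Lambda)$ to handle well-definedness and injectivity, whereas the paper argues injectivity by a direct kernel computation (showing $\bigsqcup_i Z(\alpha_i,\beta_i)=\G_\Lambda^{(0)}$ forces $\alpha_i=\beta_i$ and hence $\sum_i s_{\alpha_i}s_{\beta_i}^{*}=\sum_i s_{\alpha_i}s_{\alpha_i}^{*}=I$).
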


\begin{proof}
Let us first recall the left regular representation
$L$ of $\O_\Lambda$ on $\ell^2(\Lambda^\infty)$. Suppose that $\{\chi_x: x\in \Lambda^\infty\}$ is the standard orthonormal basis of
$\ell^2(\Lambda^\infty)$. Then $L(s_\mu)=L_\mu$ where 
\[
L_\mu(\chi_x)=\delta_{s(\mu), r(x)}\, \chi_{\mu x}\qforal x\in \Lambda^\infty.
\]
Notice that\footnote{Actually, \eqref{E:L} can be strengthened as follows:  
\[
 \sum_{i=1}^n s_{\alpha_i} s_{\alpha_i}^*= I \iff  \sum_{i=1}^n L_{\alpha_i} L_{\alpha_i}^*= I\iff \bigsqcup_{i=1}^n Z(\alpha_i)=\Lambda^\infty.
\]
To get the first ``$\Longleftarrow$", notice that the augmented left regular representation in \cite[Theorem 4.7.6]{Sim03} is faithful,
 whose restriction onto the diagonal coincides with the left regular representation (although $L$ is not necessarily injective).
} 
\begin{align}
\label{E:L}
 \sum_{i=1}^n s_{\alpha_i} s_{\alpha_i}^*= I \Longrightarrow  \sum_{i=1}^n L_{\alpha_i} L_{\alpha_i}^*= I\Longrightarrow \bigsqcup_{i=1}^n Z(\alpha_i)=\Lambda^\infty.
\end{align}

By Lemma \ref{L:U} and \eqref{E:L} one can a mapping 
$\Psi: \Lht\to F(\G_\Lambda)$ as follows: For $\sum_{i=1}^n s_{\alpha_i} s_{\beta_i}^*\in \Lht$, 
\begin{align*}
\Psi\big(\sum_{i=1}^n s_{\alpha_i} s_{\beta_i}^*\big):=\bigsqcup_{i=1}^n Z(\alpha_i, \beta_i).
\end{align*}
By comparing the multiplications of  $\Lht$ and $F(\G_\Lambda)$, it is not hard to see that the mapping $\Psi$
is a group homomorphism. Clearly by Lemma \ref{L:U} and \eqref{E:L}, $\Psi$ is surjective. 
To see the injectivity of $\Psi$, assume that 
$\Psi\big(\sum_{i=1}^n s_{\alpha_i} s_{\beta_i}^*\big)=\bigsqcup_{i=1}^n Z(\alpha_i, \beta_i)= e_{F(\G_\Lambda)}$ is the identity of $F(\G_\Lambda)$.
Then $\alpha_i=\beta_i=v_i\in \Lambda^0$ for all $1\le i\le n$. So $\bigsqcup_{i=1}^n Z(\alpha_i)=\Lambda^\infty$ implies $\{v_i: 1\le i\le n\} =\Lambda^0$. 
Hence $\sum_{i=1}^n s_{\alpha_i} s_{\beta_i}^*=\sum_{v\in\Lambda^0} s_v s_v^*=\sum_{v\in\Lambda^0} s_v=I$. Therefore $\Psi$ is injective, and so isomorphic.
\end{proof}

\smallskip
\begin{rem}
\label{R:N}
Composing $\pi$ with the mapping $\Psi$ in the proof of Proposition \ref{P:FGLht} gives a homomorphism $\pi\circ \Psi: \Lht\to [[\G_\Lambda]]$. 
Let $\N:=\ker(\pi\circ\Psi)$ and so 
\[
\N=\left\{\sum_i s_{\mu_i} s_{\nu_i}^*\in \Lht: \mu_i\sim \nu_i\right\}
\]
by Lemma \ref{L:kertheta} and Proposition \ref{P:FGLht}. Then, by \cite{BNR14, Yang16}, 
$\N$ is an abelian normal subgroup of $\Lht$. 

It follows from \cite{Yang15} that, when $\Lambda_{\Per}^0=\Lambda^0$, one has $\N\ne \{1_{\Lht}\}$ if and only if $\Lambda$ is periodic 
(refer \cite[Section 4]{Yang15} for the notation $\Lambda_{\Per}^0$). 
\end{rem}

The following corollary is immediate. 

\begin{cor}
\label{C:3=}
Let $\Lambda$ be an aperiodic higher rank graph. Then 
\[
\Lht \cong F(\G_\Lambda)\cong [[\G_\Lambda]].
\]
\end{cor}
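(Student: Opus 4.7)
The plan is to combine Proposition \ref{P:FGLht} with the observation that, under aperiodicity, the kernel computed in Lemma \ref{L:kertheta} collapses. Proposition \ref{P:FGLht} already gives $\Lht \cong F(\G_\Lambda)$ unconditionally (i.e.\ with no hypothesis on $\Lambda$), so the entire task reduces to showing $F(\G_\Lambda) \cong [[\G_\Lambda]]$ when $\Lambda$ is aperiodic. Moreover, by the very definition in \eqref{E:theta}, the canonical homomorphism $\pi : F(\G_\Lambda) \to [[\G_\Lambda]]$ is always surjective, so only injectivity needs to be established.

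For injectivity, I would invoke Lemma \ref{L:kertheta}, which identifies $\ker\pi$ as the set of those $\bigsqcup_{i=1}^n Z(\alpha_i,\beta_i) \in F(\G_\Lambda)$ with $\alpha_i \sim \beta_i$ for every $i$. Thus the corollary reduces to the following one-line claim: \emph{if $\Lambda$ is aperiodic, then the relation $\sim$ on $\Lambda$ is trivial, i.e.\ $\mu \sim \nu$ forces $\mu = \nu$.} Once this is in hand, each summand $Z(\alpha_i,\beta_i) = Z(\alpha_i,\alpha_i)$ in a kernel element reduces to a vertex projection bisection, and the covering condition in \eqref{E:FGLambda} forces the element to be the identity of $F(\G_\Lambda)$. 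So $\pi$ is injective, and chaining the two isomorphisms yields the corollary.

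The only non-automatic step is the claim that aperiodicity trivializes $\sim$. This is a standard and well-documented equivalence for the higher-rank graphs considered here: aperiodicity of $\Lambda$ is equivalent to the groupoid $\G_\Lambda$ being topologically principal (equivalently, effective), which is in turn equivalent to the triviality of $\sim$. I would simply cite this from \cite{DY09} (or, in the more general setting of quotient $k$-graphs, \cite{CKSS14, Yang15}), where the correspondence between the equivalence relation, the periodicity data of $\Lambda$, and the isotropy structure of $\G_\Lambda$ is worked out in detail.

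The main (and quite mild) obstacle is simply pointing to the correct characterization of aperiodicity; there are several equivalent formulations in the literature, and one needs the one that directly talks about the relation $\sim$. No further calculation is required, which is why the corollary is immediate after Proposition \ref{P:FGLht} and Lemma \ref{L:kertheta}.
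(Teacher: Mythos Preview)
Your argument is correct and matches the paper's implicit reasoning: the corollary is stated as ``immediate'' after Proposition~\ref{P:FGLht} and Lemma~\ref{L:kertheta}, and your route through the triviality of $\sim$ under aperiodicity is exactly what makes it so (equivalently, one could cite directly from the preliminaries that $\pi$ is injective whenever $\G_\Lambda$ is topologically principal). One small wording quibble: the individual summands $Z(\alpha_i,\alpha_i)$ need not be ``vertex projection bisections,'' but their disjoint union over a complete family $\{\alpha_i\}$ is precisely $\G_\Lambda^{(0)}$, the identity of $F(\G_\Lambda)$, which is all you need.
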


So if $\Lambda$ is aperiodic and cofinal such that $\Lambda^\infty$ is a Cantor space  (refer to Subsection \ref{SS:simpD} below), then 
$\Lht$ provides an invariant for for the groupoid $\G_\Lambda$ of $\Lambda$ by \cite[Propositions 4.5, 4.8]{KP00}, \cite[Theorem 3.10]{Mat15}, and Corollary \ref{C:3=}.

The observation below will be used later. 
\begin{obs}
\label{O:Lfaithful}
If $\Lambda$ is aperiodic, 
the left regular representation $L$ of $\O_\Lambda$ induces a natural faithful action $\phi$ of $\Lht$ on the boundary $\Lambda^\infty$:  
\begin{align}
\label{E:phi}
\phi(U)\cdot x := L(U)x \qforal U\in \Lht \text{ and }x\in \Lambda^\infty.
\end{align}  
\end{obs}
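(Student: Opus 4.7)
The plan is to unpack $\phi(U)\cdot x = L(U)x$ on basis vectors, check that $\phi$ is a homomorphism into $\Homeo(\Lambda^\infty)$, and then deduce faithfulness from Lemma \ref{L:kertheta} combined with aperiodicity.

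For a fixed $U=\sum_{i=1}^n s_{\alpha_i}s_{\beta_i}^*\in\Lht$, Lemma \ref{L:U} shows that the cylinders $Z(\beta_i)$ partition $\Lambda^\infty$, so each $x\in\Lambda^\infty$ factors uniquely as $x=\beta_i y$ for some $i$ and $y\in s(\beta_i)\Lambda^\infty$. The disjointness of the $Z(\beta_j)$ yields $L_{\beta_j}^*\chi_{\beta_i y}=\delta_{ij}\chi_y$, so
\[
L(U)\chi_x=\sum_j L_{\alpha_j}L_{\beta_j}^*\chi_{\beta_i y}=L_{\alpha_i}\chi_y=\chi_{\alpha_i y}.
\]
Hence the formula \eqref{E:phi} unambiguously defines $\phi(U)\cdot x:=\alpha_i y$; equivalently, this is the natural action on $\Lambda^\infty$ of $\Psi(U)=\bigsqcup Z(\alpha_i,\beta_i)\in F(\G_\Lambda)$ through the map $\pi$ of \eqref{E:theta}. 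Multiplicativity $\phi(UV)=\phi(U)\phi(V)$ then follows from $L$ being a $*$-representation (checked on basis vectors), and the restriction $\phi(U)|_{Z(\beta_i)}\colon\beta_i y\mapsto\alpha_i y$ is a homeomorphism onto the clopen set $Z(\alpha_i)$, so gluing across the two clopen partitions gives a homeomorphism of $\Lambda^\infty$.

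For faithfulness, assume $\phi(U)=\id_{\Lambda^\infty}$. Then $\alpha_i y=\beta_i y$ for every $y\in s(\beta_i)\Lambda^\infty$ and every $i$, i.e., $\alpha_i\sim\beta_i$ for each $i$, which by Lemma \ref{L:kertheta} means $\Psi(U)\in\ker\pi$. The only substantive point is that aperiodicity of $\Lambda$ forces the relation $\sim$ to collapse to equality (equivalently, makes $\G_\Lambda$ effective, so that $\ker\pi$ is trivial); this is the standard fact recorded in Remark \ref{R:N}, and is the main obstacle in the argument. Granting this, $\alpha_i=\beta_i$ for every $i$, and Lemma \ref{L:U} gives $U=\sum_i s_{\alpha_i}s_{\alpha_i}^*=I$. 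Therefore $\phi$ is faithful.
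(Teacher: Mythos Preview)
Your argument is correct and follows essentially the same route as the paper: both show that $\phi(U)=\id$ forces $\alpha_i\sim\beta_i$ for all $i$, and then invoke aperiodicity to conclude $U=I$. You supply additional detail the paper omits, namely the verification that \eqref{E:phi} is well-defined and a homomorphism into $\Homeo(\Lambda^\infty)$, and you recognize $\phi$ as $\pi\circ\Psi$, which streamlines the faithfulness step. One minor point: Remark~\ref{R:N} is stated under the extra hypothesis $\Lambda_{\Per}^0=\Lambda^0$, so your parenthetical route through effectiveness of $\G_\Lambda$ (hence triviality of $\ker\pi$) is the cleaner and fully general justification here; the paper itself cites \cite{Yang15,Yang16} directly for this.
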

In fact, to see the faithfulness of $\phi$, let $U=\sum_{i=1}^n s_{\mu_i}s_{\nu_i}^*\in \Lht$ 
be such that $Ux=x$ for all $x\in \Lambda^\infty$. In particular,  
$U(\nu_i y)=\nu_i y=\mu_i y$ for all $y\in s(\nu_i)\Lambda^\infty$. So $\mu_i\sim\nu_i$. This contradicts the aperiodicity of $\Lambda$ (\cite{Yang15, Yang16}).

For later convenience, we introduce the following definition. 

\begin{defn}
\label{D:ortho}
A family $\{\mu_i\in \Lambda: 1\le i\le n\}$ is called \textit{orthogonal} if $s_{\mu_i}^*s_{\mu_j}=0$ for $i\ne j$. An orthogonal family is 
said to be \textit{complete} if it is defect free: $\sum_{i=1}^n s_{\mu_i} s_{\mu_i}^*=I$. 
\end{defn}

We end this section with a natural generalization of Lemma \ref{L:U}, which could be used in the future. 

\begin{rem}
\label{R:sr}
Let 
\[
\P(\Lambda):=\left\{W=\sum_{i=1}^m s_{\mu_i}s_{\nu_i}^*\in \O_\Lambda: W \text{ is a partial isometry}\right\}\footnote{We always assume that $W$ has no summand $0$. Namely,  $s(\mu_i)=s(\nu_i)$ for $1\le i\le m$.}.
\]
Similar to the proof of Lemma \ref{L:U}, one can see that $W=\sum_{i=1}^m s_{\mu_i}s_{\nu_i}^*\in \P(\Lambda)$, if and only if both 
both $\{\mu_i\}_{i=1}^m$ and $\{\nu_i\}_{i=1}^m$ are orthogonal.

Then there is a bijection $\widetilde \Psi: B^{\text{co}}(\G_\Lambda)\to \P(\Lambda)$ given by 
\[
\widetilde \Psi\big(\bigsqcup Z(\mu_i, \nu_i)\big):= \sum_{i=1}^m s_{\mu_i}s_{\nu_i}^*. 
\]
Notice that both $B^{\text{co}}(\G_\Lambda)$ and $\P(\Lambda)$ are (unital) inverse semigroups. The mapping $\widetilde \Psi$ is actually an inverse semigroup isomorphism. 
Then, for $V=\bigsqcup Z(\mu_i, \nu_i)\in  B^{\text{co}}(\G_\Lambda)$, one can check that 
\begin{center}
$s(V)=Z(\mu)$ for some $\mu\in\Lambda$ $\iff$
$\widetilde \Psi(V)^* \widetilde \Psi(V)=s_\mu s_\mu^*$;\\
$r(V)=Z(\nu)$ for some $\nu\in\Lambda$ $\iff$ 
$\widetilde \Psi(V) \widetilde \Psi(V)^*=s_\nu s_\nu^*$.
\end{center}
\end{rem}

\section{Some Examples}

\label{S:eg}

In this section, we exhibit some examples of Higman-Thompson like groups. In particular, we obtain a seemingly new realization of the Thompson group $V_n$ via (periodic) single-vertex higher rank graphs with the flip relation.

Since single-vertex higher rank graphs play a vital role in some examples below, let us first recall some relevant details from \cite{DPY08, DPY09}. 
Let $\Lambda:=\Fth$ be a single-vertex $k$-graph. Suppose that $\Lambda$ has $n_i$ $i$-th color edges $e_\bs^i$ $(1\le \bs\le n_i,\ 1\le i\le k)$
$\theta_{ij}(\bs,\bt)=(\bs',\bt')$. Then we have the commutation relations
\[
e_\bs^i e_\bt^j=e_{\bt'}^j e_{\bs'}^i \quad (1\le i<j\le k,\ 1\le \bs, \bs'\le n_i, 1\le \bt, \bt'\le n_j).
\]
Thus $\Fth$ is a special (unital) semigroup (\cite{DPY08, DY09}).  

\begin{eg} 
\label{Ex:nV}
Let $\Lambda=\Fth$ be a single-vertex $k$-graph.  
When $\Fth$ is aperiodic, $\Lht$ is nothing but $\mathscr{G}(\Fth)$ 
in \cite{LV20}. 
In particular, if $n_i= 2$ for all $1\le i \le k$ and $\theta_{12}=\id$, one has $\Lht \cong kV$ (the Brin's $k$-dimensional Thompson group \cite{Bri04}). 

\end{eg}

\begin{eg}
\label{Ex:LSV}
As mentioned in the introduction, if $\Lambda$ is aperiodic and cofinal, then it follows from  
\cite[Theorems 4.23 and 5.15]{LSV20},  the beginning of \cite[Section 7]{LSV20}, and Corollary \ref{C:3=} above that $\Lht$ reconciles the 
generalized Thompson group $\mathscr{G}(\Lambda)$ studied in \cite{LSV20}. 
\end{eg}

\begin{eg} Let $\Lambda$ be the following rank-1 graph:
\[
\xygraph{
!{<0cm, 0cm>; <1cm,0cm>:<0cm,1cm>::}
!{(0,0)}*+{v_1 \bullet}="a1"
!{(2,0)}*+{v_2 \bullet}="a2"
!{(4,0)}*+{v_3 \bullet}="a3"
!{(4.7,0)}*+{\dots }="dots"
!{(5.6,0)}*+{v_{r-1}\bullet}="ar-1"
!{(7.6,0)}*+{\bullet v_r}="ar"
"a2":@/^.5cm/^{e_1}"a1"
"a3":@/^.5cm/^{e_2}"a2"
"ar":@/^.5cm/^{e_r}"ar-1"
"a1":@/^1.2cm/_{f_1}"ar"
"a1":@/^1.4cm/^{\vdots \ f_2}"ar"
"a1":@/^2.1cm/^{f_n}"ar"
}
\]
\vskip .3cm
\noindent
Then $\Lht$ is noting but the Higman group $V_{n,r}$ (see \cite{Mat15}). 
\end{eg}

The following property is well-known. 

\begin{lem}
\label{L:abenor}
Let $N$ be an abelian normal subgroup of $G$. Then there is an action $\alpha$ of $G/N$ on $N$ by conjugation: 
\[
\alpha:G/N\to \Aut(N), \ gN\mapsto c_g,
\]
where $c_g(n)=gng^{-1}$ for all $n\in N$. 
\end{lem}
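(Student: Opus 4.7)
The plan is to verify the three standard requirements: (a) that for each $g \in G$ the conjugation $c_g$ really lands in $\Aut(N)$; (b) that the formula $gN \mapsto c_g$ depends only on the coset, not on the representative; and (c) that the resulting map $\alpha$ is a group homomorphism. The only point where abelianness of $N$ is used is (b); normality alone suffices for (a) and (c).

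For (a), first I would fix $g\in G$ and observe that since $N\trianglelefteq G$, the inner automorphism $c_g \colon G \to G$, $x\mapsto gxg^{-1}$, restricts to a map $N\to N$. As $c_g$ is a group homomorphism globally, its restriction is a homomorphism $N\to N$, and $c_{g^{-1}}$ provides a two-sided inverse, so $c_g|_N\in \Aut(N)$.

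For (b), suppose $gN = g'N$, so $g' = gn$ for some $n\in N$. Then for any $m\in N$,
\[
c_{g'}(m) = gn\, m\, n^{-1} g^{-1} = g\bigl(nmn^{-1}\bigr)g^{-1} = g m g^{-1} = c_g(m),
\]
where the crucial middle equality uses that $N$ is abelian (so $nmn^{-1}=m$). Thus $c_{g'} = c_g$ on $N$, and $\alpha$ is well-defined. This is the only step that actually requires $N$ to be abelian, and it is the step I would highlight as the \emph{heart} of the lemma, though it is not truly an obstacle.

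For (c), I would finish by noting that for any $g,h \in G$ and $m\in N$,
\[
c_{gh}(m) = (gh)m(gh)^{-1} = g\bigl(hmh^{-1}\bigr)g^{-1} = (c_g\circ c_h)(m),
\]
so $\alpha(gN\cdot hN) = \alpha(ghN) = c_{gh} = c_g\circ c_h = \alpha(gN)\circ \alpha(hN)$, completing the proof that $\alpha$ is a group homomorphism $G/N \to \Aut(N)$. No genuine obstacles arise; the lemma is a routine packaging of the conjugation action, recorded here for later reference in analyzing the action of $\Lht / \N$ on the abelian normal subgroup $\N$ of Remark~\ref{R:N}.
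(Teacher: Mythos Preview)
Your proof is correct and complete. The paper itself states this lemma as ``well-known'' and gives no proof at all, so your three-step verification (that $c_g|_N\in\Aut(N)$ by normality, that $gN\mapsto c_g$ is well-defined by abelianness of $N$, and that $\alpha$ is a homomorphism) is a fully adequate justification where the paper provides none.
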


\begin{eg}[\textsf{A new realization of $V_n$ with $n$ even}]
\label{Ex:flip}
Consider the single-vertex rank-2 graph $\Lambda:=\Fth$ with $n(\ge 2)$ edges for each color, where $\theta$ is the flip permutation:  
$\theta(\bs,\bt)=(\bt,\bs)$. In this case, we write $\Fth$ as $\bF^+_{\text{flip}}$.
So 
\[
e_\bs^1 e^2_\bt=e^2_\bs e^1_\bt \quad (1\le \bs, \bt\le n).
\]
\vskip .01cm
$$
\xy (-2.5, 0)*+{v}; (0,0)*+{\bullet}; 
(-10,0)*+{\textcolor{blue}{\xycircle<0.98cm>{}}}; (-15,0)*+{\textcolor{blue}{\xycircle<1.45cm>{}}};
(-16,0)*+{e_1^1}; (-25,0)*+{\cdots}; (-33,0)*+{e_n^1}; 
(-15,15)*+{\textcolor{blue}{>}}; (-10,10)*+{\textcolor{blue}{>}}; 
(10,0)*+{\textcolor{red}{\xycircle<0.98cm>{}}}; (15,0)*+{\textcolor{red}{\xycircle<1.45cm>{}}};
(16,0)*+{e_1^2}; (25,0)*+{\cdots}; (33,0)*+{e_n^2};
(15,15)*+{\textcolor{red}{<}}; (10,10)*+{\textcolor{red}{<}};
(3, -20)*+{\text{the 1-skeleton of $\Fth$}}
\endxy
$$
\vskip .2cm
\noindent
Then one can show that 
$
\Lht/\ker(\pi\circ\Psi)\cong(\Lambda/\!\!\sim)_{\text{ht}}. 
$
But $\Lambda/\!\!\sim$ is a single-vertex $1$-graph with $n$ edges. Then $(\Lambda/\!\!\sim)_{\text{ht}}\cong V_n$. Thus
\[
\Lht/\ker(\pi\circ\Psi)\cong(\Lambda/\!\!\sim)_{\text{ht}}\cong V_n.
\]

As before, let $\N:=\ker(\pi\circ\Psi)$. From Remark \ref{R:N}, $\N$ is a nontrivial abelian normal subgroup of $\Lht$ as $\bF^+_{\text{flip}}$ is periodic (\cite{DY09}).
Furthermore, the above shows that 
\[
\Lht/\N\cong V_n.
\] 
Then by Lemma \ref{L:abenor} 
there is a homomorphism from $\Lht/\N\cong V_n$  to $\Aut(\N)$. 
But it is well-known that $V_n$ is simple for even $n$. Hence, in this case, $V_n$ can be realized as subgroup of $\Aut(\N)$.
Therefore, let us record the following observation which appears new in the literature. 

\begin{obs}
The Thompson group $V_n$ with $n$ even can be (faithfully) represented in the automorphism group of the abelian 
group $\N$ in the C*-algebra of $\bF^+_{\rm flip}$.
\end{obs}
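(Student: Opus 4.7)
The plan is to combine Lemma~\ref{L:abenor} with Higman's theorem that $V_n$ is simple for $n$ even. Applying Lemma~\ref{L:abenor} to the abelian normal subgroup $\N \trianglelefteq \Lht$ with $\Lht/\N \cong V_n$, conjugation yields a homomorphism
\[
\alpha : V_n \cong \Lht/\N \longrightarrow \Aut(\N).
\]
Simplicity forces $\ker\alpha$ to be either trivial or all of $V_n$, so the observation reduces to exhibiting a single $W \in \N$ and $V \in \Lht$ with $VWV^{-1}\ne W$.

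My first guess for $W$ would be the period-one generator $u := \sum_{s=1}^n s_{e^1_s} s_{e^2_s}^* \in \N$. However, the identity $s_{e^2_s}^* s_{e^1_a} = \delta_{s,a}\,u$ (coming from $\Lambda^{\min}(e^2_s,e^1_a)$ and the flip relation) together with its transpose yields $u\,s_{e^1_a}=s_{e^1_a}\,u$ and $u\,s_{e^2_a}=s_{e^2_a}\,u=s_{e^1_a}$, so $u$ is in fact \emph{central} in $\O_\Lambda$ and useless as a witness. One must therefore exploit the mixed-degree freedom in Remark~\ref{R:N}. Setting $p_a := s_{e^1_a} s_{e^1_a}^*$, I would consider
\[
W := p_1 u + p_2 u^{-1} + \sum_{a=3}^{n} p_a \;=\; s_{e^1_1} s_{e^2_1}^* + s_{e^2_2} s_{e^1_2}^* + \sum_{a=3}^{n} s_{e^1_a} s_{e^1_a}^*,
\]
which is a unitary in $\N$ by Lemma~\ref{L:U}: each summand is $s_\mu s_\nu^*$ with $\mu\sim\nu$, and the centrality of $u$ gives $s_{e^1_a}s_{e^1_a}^* = s_{e^2_a}s_{e^2_a}^*$, so both source and range projection families are complete orthogonal. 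Conjugating by the red-edge transposition
\[
V := s_{e^1_1} s_{e^1_2}^* + s_{e^1_2} s_{e^1_1}^* + \sum_{a\ge 3} p_a \;\in\; \Lht,
\]
which intertwines $p_1\leftrightarrow p_2$ and commutes with the central $u$, one gets
\[
VWV^{-1} - W \;=\; (p_2 - p_1)(u - u^{-1}).
\]
The degree-$(1,-1)$ component of this difference is $(p_2-p_1)u = s_{e^1_2}s_{e^2_2}^* - s_{e^1_1}s_{e^2_1}^*$, a difference of two nonzero partial isometries with orthogonal supports, hence nonzero. The $\bT^2$-gauge grading on $\O_\Lambda$ then forces $VWV^{-1}\ne W$, so $\alpha$ is nontrivial and by the simplicity of $V_n$ it is injective.

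The main obstacle is locating the correct witness: the naive choice $u$ fails because $u$ turns out to be central in $\O_\Lambda$, and one might wrongly conclude $\N\subseteq Z(\Lht)$. The observation only survives because $\N$ contains genuinely mixed-degree elements built by pairing $u$ and $u^{-1}$ against orthogonal range projections (like $W$), and these do not commute with the degree-preserving permutations of red edges already present in $\Lht$. Once $W$ and $V$ are written down, the verification $[V,W]\ne 1$ is a short computation with the Cuntz--Krieger and flip relations.
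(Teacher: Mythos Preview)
Your argument follows the paper's route exactly: apply Lemma~\ref{L:abenor} to the abelian normal subgroup $\N\trianglelefteq\Lht$, use $\Lht/\N\cong V_n$, and invoke Higman's simplicity of $V_n$ for even $n$ to conclude that the conjugation homomorphism $V_n\to\Aut(\N)$ is injective. The paper stops at that point and does not verify that the homomorphism is nontrivial; you go further and supply an explicit witness. Your computations are correct: the canonical element $u=\sum_s s_{e^1_s}s_{e^2_s}^*$ is indeed central in $\O_\Lambda$ (this is the known central unitary witnessing $\Per(\bF^+_{\mathrm{flip}})=\bZ(1,-1)$), so one must pass to a mixed element of $\N$ such as your $W=p_1u+p_2u^{-1}+\sum_{a\ge 3}p_a$; the identity $p_a=s_{e^1_a}s_{e^1_a}^*=s_{e^2_a}s_{e^2_a}^*$ (a consequence of $u$ being a central unitary with $us_{e^2_a}=s_{e^1_a}$) makes $W$ a unitary in $\N$; and conjugation by the transposition $V$ yields $VWV^{-1}-W=(p_2-p_1)(u-u^{-1})$, whose degree-$(1,-1)$ gauge component $s_{e^1_2}s_{e^2_2}^*-s_{e^1_1}s_{e^2_1}^*$ is visibly nonzero. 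So your proof is not merely aligned with the paper's but in fact supplies the faithfulness step that the paper leaves implicit.
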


Note that the above single-vertex flip rank-2 graph can be replaced by any flip single-vertex rank-$k$ graph ($k\ge 2$) with $n$ edges for each color. 
\end{eg}

\begin{eg}
\label{Ex:noname}
Here we provide an example showing that its Higman-Thompson like group is $\bZ$. 
Consider the 1-graph $\Lambda$ determined by the 2-cycle graph: 
\[
\xygraph{
!{<0cm, 0cm>; <1cm,0cm>:<0cm,1cm>::}
!{(2,0)}*+{v_1 \bullet}="b"
!{(6,0)}*+{\bullet v_2}="e"
"b":@/^.8cm/^{e_1}"e"
"e":@/^.8cm/^{e_2}"b"
}
\]

Then 
\[v_1 \Lambda=\{v_1, e_2e_1, e_2e_1 e_2, (e_2e_1)^2, ...\} \text{ and }v_2 \Lambda=\{v_2, e_1e_2, e_1e_2 e_1, (e_1e_2)^2, ...\}.\] 
If 
$U=s_{\mu_1}s_{\nu_1}^*+s_{\mu_2}s_{\nu_2}^*\in \Lht$, then either $\{\mu_1, \mu_2\}=\{(e_2e_1)^m e_2, (e_1e_2)^me_1\}$ or 
$\{\mu_1, \mu_2\}=\{(e_2e_1)^m , (e_1e_2)^m\}$ for some $m\in \bN$, and similarly for $\{\nu_1, \nu_2\}$. 
So we have the identities 
\begin{align*}
s_{(e_2e_1)^me_2}+s_{(e_1e_2)^me_1}&=(s_{e_1}+s_{e_2})^{2m+1},\\
s_{(e_2e_1)^m}+s_{(e_1e_2)^m}&=(s_{e_1}+s_{e_2})^{2m}.
\end{align*}
Then one can show that $\Lht=\langle s_{e_1}+s_{e_2}\rangle \cong \bZ$.  

Clearly, $\Lambda^\infty=\{(e_1e_2)^\infty, (e_2e_1)^\infty\}$ and so it is not a Cantor space. Also, the adjacency matrix of $\Lambda$ is a permutation matrix.

Notice that $e_1e_2\sim v_2$ and $e_2 e_1\sim v_1$. Thus $\ker(\pi\circ\Psi)=\{(s_{e_1}+s_{e_2})^{2m}: m\in\bZ\}\cong 2\bZ$, and so 
$\Lht/\ker(\pi\circ\Psi)\cong \bZ_2$, which is isomorphic to the Higman-Thompson like group of the quotient graph $\Lambda/\!\!\sim$, which is a $\bZ_2$-graph. 
\end{eg}

The above example can be generalized to any $n$-cycle graph with $n\ge 1$. 

\smallskip
\begin{rem}
\label{Ex:gen} 
This remark sketches a common framework for some periodic $k$-graphs (\cite{CKSS14, Yang15, Yang16}), which generalizes 
Examples \ref{Ex:flip} and \ref{Ex:noname}.
Suppose that the periodicity group $\Per\Lambda$ of $\Lambda$ is a subgroup of $\bZ^k$. Under some conditions,  
one could prove that $\Lht/\ker(\pi\circ\Psi)\cong (\Lambda/\!\!\sim)_{\text{ht}}$, where 
$\Lambda/\!\!\sim$ is a $\textsf{q}(\bN^k)$-graph with $\textsf{q}$ being the natural quotient map $\textsf{q}: \bZ^k\to \bZ^k/\Per\Lambda$. 
\end{rem}

\section{Some properties of $\Lht$}

\label{S:Propo}

Let $\Lambda$ be an aperiodic strongly connected $k$-graph. In this section, we show the simplicity of the commutator group $\DLht$ and 
$\DLht$ has only one nontrivial URS. 
Furthermore, if $\Lambda$ is single-vertex, then we prove that $\Lht$ is C*-simple and also provide an explicit description on the stabilizer URS of $\Lht$ under 
a natural action on the infinite path space. 

For $\mu\in\Lambda$, let $P_\mu:=s_\mu s_\mu^*$ and $P_\mu^\perp = I-P_\mu$.

\subsection{The simplicity of the commutator group $\DLht$}
\label{SS:simpD}

Let $\Lambda$ be a (finite) strongly connected $k$-graph $\Lambda$. A sufficient condition is given in \cite[Proposition 2.17]{FGJKP20} to guarantee that $\Lambda^\infty$ is a Cantor space (i.e., a compact, metrizable, totally disconnected space with no isolated points). In particular, if every vertex receives at least two $i$-th coloured edges for every $1\le i\le k$, then $\Lambda^\infty$ is a Cantor space (see \cite[Remark 2.18]{FGJKP20}). 

Recall that a minimal groupoid whose unit space is a Cantor space is called \textit{purely infinite} if, for every clopen set $A\subseteq \G^{(0)}$, there exist compact open bisections 
$U,V\subseteq \G$ such that $s(U)=s(V)=A$ and $r(U)\sqcup r(V)\subseteq A$.

\smallskip
Motivated by \cite[Lemma 6.1]{Mat15}, we have the following. 

\begin{lem}
\label{L:pureinf}
Let $\Lambda$ be a strongly connected $k$-graph such that $\Lambda^\infty$ is a Cantor space. Then $\G_\Lambda$ is minimal and purely infinite.
\end{lem}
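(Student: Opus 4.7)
The plan is to verify minimality and pure infiniteness separately, both via elementary constructions exploiting the hypotheses. For minimality I would fix $x\in \Lambda^\infty$ and a basic cylinder $Z(\mu)$ and show the $\G_\Lambda$-orbit of $x$ meets $Z(\mu)$. Strong connectedness of $\Lambda$ supplies a finite path $\eta\in s(\mu)\Lambda r(x)$, so that $\mu\eta\in \Lambda$ satisfies $s(\mu\eta)=r(x)$. Then $\mu\eta x \in Z(\mu)$ and the triple $(\mu\eta x,\,d(\mu\eta),\,x)\in \G_\Lambda$ (with the trivial decomposition $x = r(x)\cdot x$) exhibits $\mu\eta x$ as an orbit-mate of $x$. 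Since the cylinders form a basis of the topology on $\Lambda^\infty$, every orbit is dense and $\G_\Lambda$ is minimal.

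For pure infiniteness, let $A\subseteq \Lambda^\infty$ be clopen. By compactness together with the Cantor space hypothesis, I can write $A=\bigsqcup_{i=1}^n Z(\mu_i)$ as a finite disjoint union of basic cylinders. The key preliminary observation is that, since $\Lambda^\infty$ has no isolated points, each $Z(\mu_i)$ contains at least two distinct infinite paths, which by the factorization property translates into the existence of some $p_i\in \bN^k$ with $|s(\mu_i)\Lambda^{p_i}|\ge 2$; otherwise $s(\mu_i)\Lambda^n$ would be a singleton for every $n$, forcing $Z(\mu_i)$ to be a singleton and hence an isolated point. For each $i$, fix such a $p_i$ and distinct $\alpha_i,\beta_i\in s(\mu_i)\Lambda^{p_i}$.

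With these in hand I would define the compact open sets
\[
U:=\bigsqcup_{i=1}^n Z(\mu_i\alpha_i,\mu_i) \qquad\text{and}\qquad V:=\bigsqcup_{i=1}^n Z(\mu_i\beta_i,\mu_i).
\]
Each summand is individually a compact open bisection, and the disjointness of the $Z(\mu_i)$, together with $Z(\mu_i\alpha_i)\cap Z(\mu_i\beta_i)=\emptyset$ (because $\alpha_i\ne\beta_i$ share the same degree), ensures that both $s$ and $r$ are injective on the full unions, so $U,V\in B^{\text{co}}(\G_\Lambda)$. By construction $s(U)=s(V)=\bigsqcup_i Z(\mu_i)=A$ and $r(U)\sqcup r(V) = \bigsqcup_i \bigl(Z(\mu_i\alpha_i)\sqcup Z(\mu_i\beta_i)\bigr)\subseteq A$, which is exactly the pure infiniteness condition. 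The main obstacle here is really only the no-isolated-points characterization in the middle step; once this is in place, strong connectedness plays no further role in the pure-infiniteness argument and the construction is entirely formal.
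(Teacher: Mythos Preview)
Your proof is correct. Both the minimality argument and the pure-infiniteness construction go through as written; in particular, the ``no isolated points'' step is handled cleanly, and the bisections $U,V$ are genuine compact open bisections with the required source and range properties.

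Your route, however, differs from the paper's. The paper does not verify minimality and pure infiniteness separately; instead it appeals to \cite[Proposition~4.11(2)]{Mat15}, which characterizes ``minimal and purely infinite'' by a single compression condition: for every pair of nonempty clopen sets $A,B\subseteq\G_\Lambda^{(0)}$ there is a compact open bisection $U$ with $s(U)=A$ and $r(U)\subseteq B$. The paper then checks this condition by writing $A=\bigsqcup_i Z(\mu_i)$, $B=\bigsqcup_j Z(\nu_j)$, subdividing $B$ so that it has at least as many pieces as $A$ (here the Cantor hypothesis enters implicitly), and using strong connectedness to route each $Z(\mu_i)$ into a distinct $Z(\nu_j)$. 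What you do instead is more self-contained: you avoid the external citation, and your argument cleanly separates the roles of the two hypotheses---strong connectedness yields minimality, while the Cantor property alone yields pure infiniteness. The paper's version is shorter once Matui's criterion is granted, but your decomposition makes the logical dependencies more transparent.
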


\begin{proof}
To show this,  we 
apply \cite[Proposition 4.11 (2)]{Mat15}. For this, let $A\ne \mt\ne B$ be two clopen subsets of $\Lambda^\infty$, Write $A=\bigsqcup_{i=1}^m Z(\mu_i)$ and $B=\bigsqcup_{j=1}^n Z(\nu_j)$. 
By dividing up $Z(\nu_j)$'s if necessary, we assume that $m\le n$. Let $f:\{1,\ldots,m\}\to \{1,\ldots, n\}$ be an injection. Since $\Lambda$ is strongly connected, there is $\lambda_i \in s(\nu_{f(i)})\Lambda s(\mu_i)$. 
Let $\mu_i':=\nu_{f(i)} \lambda_i$. Then $Z(\mu_i')\subset Z(\nu_{f(i)})$. Let $U:=\bigsqcup_{i=1}^m Z(\mu_i', \mu_i)$. Then $U$ is a compact open bisection satisfying $s(U)=A$ and $r(U)\subseteq B$.  
\end{proof}

Recall that if $\Lambda$ is aperiodic, then by Observation \ref{O:Lfaithful} the left regular representation of $\O_\Lambda$ induces a faithful action $\phi$ of $\Lht$ on the infinite path 
space $\Lambda^\infty$ (see \eqref{E:phi}). So one has the stabilizer URS $\S_{\Lht}(\Lambda^\infty)$ of the action $\phi$.
This yields the stabilizer URS $\S_{\DLht}(\Lambda^\infty)$ of the induced action $\DLht \curvearrowright\Lambda^\infty$.

\begin{thm}
\label{T:DG}
Let $\Lambda$ be an aperiodic strongly connected $k$-graph such that $\Lambda^\infty$ is a Cantor space.  
\begin{itemize}
\item[(i)] If $N$ is a nontrivial normal subgroup of $\Lht$, then $\DLht\subseteq N$. In particular $\DLht$ is simple. 

\item[(ii)] The only nontrivial URS of $\DLht$ is the stabilizer URS. 
\end{itemize}
\end{thm}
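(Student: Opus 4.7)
The plan is to reduce everything to the case of the topological full group $[[\G_\Lambda]]$ and then invoke machinery of Matui and Le Boudec--Matte Bon for minimal, effective, purely infinite ample groupoids with Cantor unit space. I would begin by collecting all the groupoid-theoretic inputs. Since $\Lambda$ is aperiodic, the associated groupoid $\G_\Lambda$ is effective (aperiodicity of $\Lambda$ translates exactly into the triviality of interior isotropy, which is the usual dictionary via $\sim$). Corollary \ref{C:3=} gives $\Lht \cong F(\G_\Lambda)\cong [[\G_\Lambda]]$, so every group-theoretic property transports between $\Lht$ and $[[\G_\Lambda]]$; in particular, the induced action $\phi$ of $\Lht$ on $\Lambda^\infty$ corresponds to the canonical action of $[[\G_\Lambda]]$ on $\G_\Lambda^{(0)}=\Lambda^\infty$. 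Lemma \ref{L:pureinf} supplies minimality and pure infiniteness of $\G_\Lambda$, and the assumption that $\Lambda^\infty$ is a Cantor space completes the list of hypotheses.

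For (i), I would invoke Matui's simplicity theorem for topological full groups: whenever $\G$ is a minimal, effective, purely infinite ample groupoid with Cantor unit space, the derived group $D([[\G]])$ is simple, and every nontrivial normal subgroup of $[[\G]]$ contains $D([[\G]])$ (\cite[Thm.~4.16]{Mat15}). Applying this to $\G_\Lambda$ and transporting via the isomorphism from Corollary \ref{C:3=} yields both assertions of part (i): if $N\trianglelefteq \Lht$ is nontrivial then $\DLht \subseteq N$, and in particular $\DLht$ is simple.

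For (ii), I would apply the URS classification for commutator subgroups of topological full groups from \cite{LBMB18, MB18}: under the same hypotheses on $\G$, the only nontrivial URS of $D([[\G]])$ is the stabilizer URS of its action on $\G^{(0)}$. To invoke this cleanly I first need that the action of $\DLht$ on $\Lambda^\infty$ is faithful and minimal; faithfulness follows from Observation \ref{O:Lfaithful} restricted to $\DLht$ combined with the simplicity established in (i), and minimality follows from the minimality of $\G_\Lambda$ together with the fact that $\DLht$ acts transitively enough on clopen sets through the pure infiniteness argument used in the proof of Lemma \ref{L:pureinf}. The URS classification then gives the desired conclusion.

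The main obstacle I anticipate is the bookkeeping required to match the hypotheses exactly in the URS result, and verifying minimality of the $\DLht$-action on $\Lambda^\infty$ (not just the $\Lht$-action). This uses the pure infiniteness of $\G_\Lambda$ together with an argument producing commutators with prescribed support: given clopen $A, B\subseteq\Lambda^\infty$, one can find $U,V\in B^{\text{co}}(\G_\Lambda)$ with $s(U)=s(V)=A$ and $r(U)\sqcup r(V)\subseteq B$, and then build elements of $\DLht$ sending a point of $A$ into $B$. Once minimality and faithfulness of $\DLht\curvearrowright\Lambda^\infty$ are in hand, the cited URS theorem completes the proof of (ii).
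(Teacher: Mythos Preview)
Your argument for (i) is essentially the paper's: identify $\Lht$ with the topological full group via Corollary~\ref{C:3=}, use Lemma~\ref{L:pureinf} for minimality and pure infiniteness, note effectiveness from aperiodicity, and invoke \cite[Theorem~4.16]{Mat15}.

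For (ii) you take a somewhat different route from the paper. The paper does not verify minimality of the $\DLht$-action directly; instead it observes, via \cite[Section~4]{Nek19} and Lemma~\ref{L:pureinf}, that for a purely infinite minimal groupoid the commutator subgroup $\DLht$ coincides with Nekrashevych's \emph{alternating} subgroup $A(\Lht)$, and then applies \cite[Corollary~6.5]{MB18}, which is stated precisely for alternating full groups. This buys two things: it removes the need for your ad hoc construction of commutators realizing prescribed clopen moves, and it matches the hypotheses of the cited URS result exactly, so no bookkeeping is required. Your approach can be made to work, but you should be aware that the URS classification in \cite{MB18} is formulated for $A(\G)$ rather than for $D([[\G]])$ in general, so the identification $\DLht=A(\Lht)$ (which uses pure infiniteness) is the cleanest bridge; once you have it, the separate minimality check for $\DLht\curvearrowright\Lambda^\infty$ becomes unnecessary.
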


\begin{proof}
Since $\Lambda$ is aperiodic and strongly connected, it is known that $\G_\Lambda$ is minimal and topologically principal (see, e.g., \cite[Propositions 6.3 and 6.5]{LY-IMRN}). 

(i). This immediately follows from Proposition \ref{P:FGLht}, Lemma \ref{L:pureinf} and \cite[Theorem 4.16]{Mat15}.

(ii). By Proposition \ref{P:FGLht}, Lemma \ref{L:pureinf} and \cite[Section 4]{Nek19}, the commutator group $\DLht$ coincides with the alternative subgroup $A(\Lht)$
of $\Lht$. Then \cite[Corollary 6.5]{MB18} ends the proof. 
\end{proof}

\begin{rem}
(a) Theorem \ref{T:DG} (i) generalizes a recent result in \cite{LV20} (cf. Example \ref{Ex:nV}), and also greatly simplifies the proof.

(b) By \cite[Theorem 5.2]{Mat15}, the index mapping $I: [[\G_\Lambda]]\to H_1(\G_\Lambda)$ defined there is also surjective.  
\end{rem}

The following corollary is immediate. 

\begin{cor}
Let $\Lambda$ be an aperiodic strongly connected $k$-graph such that $\Lht$ is simple. Then its URS's are $1_{\Lht}$, $\Lht$ and $\S_{\Lht}(\Lambda^\infty)$.
\end{cor}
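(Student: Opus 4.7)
The plan is a short reduction to Theorem \ref{T:DG}(ii). Working in the setting inherited from Theorem \ref{T:DG} (so in particular assuming $\Lambda^\infty$ is a Cantor space), the key observation is that simplicity of $\Lht$ forces $\DLht = \Lht$, after which the classification is immediate.

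First I would argue that $\DLht = \Lht$. Since $\DLht$ is a normal subgroup of $\Lht$, the simplicity hypothesis leaves only two possibilities: $\DLht = \{1_{\Lht}\}$ or $\DLht = \Lht$. But Theorem \ref{T:DG}(i) asserts that $\DLht$ is itself simple, and hence in particular nontrivial; so $\DLht = \Lht$.

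Next I would invoke Theorem \ref{T:DG}(ii), which states that the only nontrivial URS of $\DLht$ is the stabilizer URS $\S_{\DLht}(\Lambda^\infty)$. Under the identification $\DLht = \Lht$, the action of $\DLht$ on $\Lambda^\infty$ coincides with $\phi$, so $\S_{\DLht}(\Lambda^\infty) = \S_{\Lht}(\Lambda^\infty)$, and URSs of $\Lht$ in $\Sub(\Lht)$ are precisely URSs of $\DLht$ in $\Sub(\DLht)$. Adjoining the two trivial URSs $\{1_{\Lht}\}$ and $\Lht$ (which are always closed minimal conjugation-invariant singletons in $\Sub(\Lht)$) gives exactly the three URSs listed in the corollary.

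There is essentially no obstacle; the proof is a two-line reduction. If one wishes to be pedantic, one should check that the three URSs are genuinely distinct, so that the corollary's list is not redundant: $\S_{\Lht}(\Lambda^\infty) \neq \{1_{\Lht}\}$ because the action $\phi$ is not free (e.g., an element of the form $s_\mu s_\nu^* + s_\nu s_\mu^* + \sum_{\lambda} s_\lambda s_\lambda^*$ fixes infinite paths disjoint from $Z(\mu) \cup Z(\nu)$), and $\S_{\Lht}(\Lambda^\infty) \neq \Lht$ because the faithfulness of $\phi$ (Observation \ref{O:Lfaithful}) forbids any nonidentity element from fixing every point of $\Lambda^\infty$.
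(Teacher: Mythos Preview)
Your proof is correct and matches the paper's approach: the paper simply states that the corollary is ``immediate'' from Theorem \ref{T:DG}, and your reduction via $\DLht = \Lht$ is exactly the intended two-line argument. Your additional care about distinctness of the three URSs is a reasonable embellishment, though the paper does not bother with it.
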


When $\Lambda$ has a single-vertex, $\S_{\Lht}(\Lambda^\infty)$ will be explicitly described in Theorem \ref{T:staURS} below.

\subsection{A description of the stabilizer URS}

Let $\Lambda$ be a single-vertex $k$-graph with $n_i$ edges of color $i$ with $n_i\ge 1$. Put 
\[
\bn:=(n_1,\ldots, n_k).
\] 

We first present a technical but explicit extension lemma.

\begin{lem}
\label{L:munu}
Suppose $\mu_i, \nu_i\in\Lambda$ ($1\le i\le n$) such that both $\{\mu_1, \ldots, \mu_n\}$ and 
$\{\nu_1, \ldots, \nu_n\}$ are orthogonal.  Then there is a unitary $U\in \Lht$ such that 
\[
U=\sum_{i=1}^n s_{\mu_i} s_{\nu_i}^* + \sum_{j=1}^l s_{\alpha_j} s_{\beta_j}^*.
\] 
for some $\alpha_j$'s and $\beta_j$'s in $\Lambda$. 
\end{lem}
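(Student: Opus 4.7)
The plan is to refine both orthogonal families via the Cuntz-Krieger relations so that they become orthogonal subsets of some $\Lambda^D$ and $\Lambda^{D'}$, and then to furnish the $\alpha_j$ and $\beta_j$ by taking complements in these complete orthogonal families. Since $\Lambda$ has a single vertex, $I = \sum_{\gamma \in \Lambda^e}s_\gamma s_\gamma^*$ for every $e \in \bN^k$, which yields the key refinement identity
\[
s_{\mu_i}s_{\nu_i}^* \;=\; \sum_{\gamma \in \Lambda^{e_i}}s_{\mu_i\gamma}\,s_{\nu_i\gamma}^* \qquad (e_i \in \bN^k),
\]
and this will be the workhorse throughout.

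First I would pick $D \in \bN^k$ componentwise at least as large as each $d(\mu_i)$, set $e_i := D - d(\mu_i)$, and rewrite $\sum_i s_{\mu_i}s_{\nu_i}^*$ so that every $\mu$-path in the refined sum has degree $D$. The resulting family $M := \{\mu_i\gamma : 1\le i\le n,\ \gamma\in \Lambda^{e_i}\} \subseteq \Lambda^D$ remains orthogonal: for distinct pairs $(i,\gamma)\ne(i',\gamma')$, the product $s_{\mu_i\gamma}^*s_{\mu_{i'}\gamma'} = s_\gamma^*s_{\mu_i}^*s_{\mu_{i'}}s_{\gamma'}$ vanishes either by $s_{\mu_i}^*s_{\mu_{i'}} = 0$ (when $i\ne i'$, via the original orthogonality) or by $s_\gamma^*s_{\gamma'} = 0$ (when $i=i'$ but $\gamma\ne\gamma'$ have equal degrees). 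Setting $\{\alpha_j\} := \Lambda^D \setminus M$ then completes $M$ to the full complete orthogonal family $\Lambda^D$, which by Cuntz-Krieger at the single vertex satisfies $\sum_{\lambda \in \Lambda^D}s_\lambda s_\lambda^* = I$.

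An entirely analogous refinement on the $\nu$-side, with $D' \in \bN^k$ chosen large enough that every expanded $\nu$-path lives in $\Lambda^{D'}$, yields an orthogonal $N \subseteq \Lambda^{D'}$ with complement $\{\beta_j\} := \Lambda^{D'}\setminus N$. The hard part will be matching the two completion cardinalities $|\{\alpha_j\}|$ and $|\{\beta_j\}|$: each additional Cuntz-Krieger subdivision of some $\alpha_j$ by a padding of degree $f$ changes the $\mu$-side count by $|\Lambda^f| - 1$, and symmetrically on the $\nu$-side. Together with the freedom to enlarge $D$ and $D'$, this flexibility is what I would use to arrange $|\{\alpha_j\}| = |\{\beta_j\}| = l$. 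Once the cardinalities are matched, any bijective pairing $\alpha_j \leftrightarrow \beta_j$ produces
\[
U \;=\; \sum_{i=1}^n s_{\mu_i}s_{\nu_i}^* + \sum_{j=1}^l s_{\alpha_j}s_{\beta_j}^*,
\]
and Lemma \ref{L:U} then delivers $U \in \Lht$.
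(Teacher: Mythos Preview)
Your overall strategy---complete each orthogonal family inside some $\Lambda^D$ (resp.\ $\Lambda^{D'}$), take complements for the $\alpha_j$'s and $\beta_j$'s, then subdivide to equalize---is exactly the paper's approach. The gap is that you flag cardinality matching as ``the hard part'' and then only gesture at the flexibility of enlarging $D, D'$ or subdividing individual $\alpha_j, \beta_j$; you never show these moves suffice. That step is the whole content of the lemma, and the paper carries it out explicitly: with $\mathsf{M} := \sum_i d(\mu_i)$, $\mathsf{N} := \sum_i d(\nu_i)$, it takes $D = \mathsf{M}$, $D' = \mathsf{N}$, writes the complements as $\{\omega_1,\ldots,\omega_{\bs}\}$ and $\{\gamma_1,\ldots,\gamma_{\bt}\}$, and then subdivides by the recipe $\omega_i \mapsto \omega_i\Lambda^{\mathsf{M}-d(\mu_i)}$ for $1\le i\le n$, $\omega_{n+1}\mapsto \omega_{n+1}\Lambda^{\mathsf{N}}$, and symmetrically on the $\nu$-side. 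A direct count then gives both refined families exactly $\bn^{\mathsf{M}} + \bn^{\mathsf{N}} - 1$ members. You need a concrete recipe of this kind (and to check the count) for the argument to be complete.

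One smaller point: your two-stage refinement---first forcing all $\mu$-paths into $\Lambda^D$, then all $\nu$-paths into $\Lambda^{D'}$---is unnecessary and slightly muddies things, since the second pass perturbs the $\mu$-side degrees again. All you actually need (via Lemma~\ref{L:U}) is that $\{\mu_i\}\cup\{\alpha_j\}$ and $\{\nu_i\}\cup\{\beta_j\}$ are each complete orthogonal; the paper keeps the original $\mu_i, \nu_i$ untouched throughout and only subdivides the added $\omega$'s and $\gamma$'s.
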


\begin{proof}
By Proposition \ref{P:FGLht}, it is equivalent to show that there are $\alpha_j, \beta_j\in \Lambda$ ($j=1,\ldots, l$) such that  
\[
\left(\bigsqcup_{i=1}^n Z(\mu_i, \nu_i)\right)\bigsqcup\left(\bigsqcup_{j=1}^l Z(\alpha_j, \beta_j)\right)\in F(\G_\Lambda).
\]

We first extend $\{\mu_1,\ldots, \mu_n\}$ and $\{\nu_1,\ldots, \nu_n\}$ to two complete (orthogonal) families as follows. For simplicity, let $\displaystyle\mathsf{M}:=\sum_{i=1}^n d(\mu_i)$, 
$\displaystyle\mathsf{N}:=\sum_{i=1}^n d(\nu_i)$, $\displaystyle\bs:=\bn^{\mathsf{M}}-\sum_{i=1}^n\bn^{\mathsf{M}-d(\mu_i)}$, and
$\displaystyle\bt:=\bn^{\mathsf{N}}-\sum_{i=1}^n\bn^{\mathsf{N}-d(\nu_i)}$. Set
\begin{align*}
\Lambda^{\mathsf{M}}\setminus \bigcup_{i=1}^n\mu_i\, \Lambda^{\mathsf{M}-d(\mu_i)}
=\{\omega_i: 1\le i\le \bs\}, \quad 
\Lambda^{\mathsf{N}}\setminus \bigcup_{i=1}^n\nu_i\, \Lambda^{\mathsf{N}-d(\nu_i)}
=\{\gamma_i: 1\le i\le \bt\}. 
\end{align*}
Then one can see that 
\begin{align*}
\{\mu_1,\ldots, \mu_n,\omega_1,\ldots, \omega_\bs\}
\end{align*}
and
\begin{align*}
\{\nu_1,\ldots, \nu_n,\gamma_1,\ldots, \gamma_\bt\}
\end{align*}
are two complete families extended from $\{\mu_1,\ldots, \mu_n\}$ and $\{\nu_1,\ldots, \nu_n\}$, respectively. 

Now we adjust the two complete families obtained above to two ones \textsf{with equal length but keep $\mu_i$'s and $\nu_i$'s $(1\le i\le n)$ unchanged}: 
\begin{align}
\label{E:mu}
&\mu_1,\ldots, \mu_n,\omega_1\Lambda^{\mathsf{M}-d(\mu_1)},\ldots, \omega_n\Lambda^{\mathsf{M}-d(\mu_n)}, \omega_{n+1}\Lambda^{\mathsf{N}}, \ldots, \omega_\bs; \\
\label{E:nu}
&\nu_1,\ldots, \nu_n,\gamma_1\Lambda^{\mathsf{N}-d(\nu_1)},\ldots, \gamma_n\Lambda^{\mathsf{N}-d(\nu_n)}, \omega_{n+1}\Lambda^{\mathsf{M}}, \ldots, \gamma_\bt. 
\end{align}
One can easily compute that 
\eqref{E:mu} has $\displaystyle n+\bs -n-1+ \sum_{i=1}^n \bn^{\mathsf{M}-d(\mu_i)}+\bn^{\mathsf{N}}=\bn^{\mathsf{M}}+\bn^{\mathsf{N}}-1$, and 
\eqref{E:nu} has $\displaystyle n+\bt -n-1+ \sum_{i=1}^n \bn^{\mathsf{N}-d(\nu_i)}+\bn^{\mathsf{M}}=\bn^{\mathsf{N}}+\bn^{\mathsf{M}}-1$. So \eqref{E:mu} and \eqref{E:nu} 
have equal number of elements. 

Therefore we obtain a desired extension. 
\end{proof}

In what follows, we show that $\phi:G\curvearrowright \Lambda^\infty$ is an extreme boundary action, that is, $\Lambda^\infty$ is compact, $\phi$ is minimal and extremely proximal 
(see \cite[Subsection 2.1]{LBMB18} for all relevant definitions).

\begin{lem}
\label{L:extreb}
Let $\Lambda$ be an aperiodic single-vertex $k$-graph. Then the action $\phi$ of $\Lht$ on $\Lambda^\infty$ is an extreme boundary action. 
\end{lem}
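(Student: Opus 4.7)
The plan is to verify the three defining ingredients of an extreme boundary action in the sense of \cite[Subsection~2.1]{LBMB18}: compactness of $\Lambda^\infty$, minimality of $\phi$, and extreme proximality of $\phi$. Compactness has already been recorded in the preliminaries (since $\Lambda^0$ is finite), so I focus on the latter two.

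For minimality, given $x\in\Lambda^\infty$ and a basic cylinder $Z(\mu)$, I will use the factorization property to write $x=\nu y$ with $\nu:=x(0,d(\mu))$. The singleton families $\{\mu\}$ and $\{\nu\}$ are trivially orthogonal, so Lemma~\ref{L:munu} supplies a unitary $U\in\Lht$ containing $s_\mu s_\nu^*$ as a summand. A direct computation with the formula for $L$ shows that the remaining summands vanish on $\chi_{\nu y}$ (their initial paths are orthogonal to $\nu$), yielding $\phi(U)\cdot x=\mu y\in Z(\mu)$; hence every orbit is dense.

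For extreme proximality, let $C\subsetneq\Lambda^\infty$ be a proper closed subset and $V\subseteq\Lambda^\infty$ a nonempty open subset. I will choose basic cylinders $Z(\alpha)\subseteq\Lambda^\infty\setminus C$ and $Z(\beta)\subseteq V$. Because $\Lambda$ has a single vertex, the Cuntz-Krieger relations give $\Lambda^\infty=\bigsqcup_{\lambda\in\Lambda^{d(\alpha)}}Z(\lambda)$, so enumerating $\Lambda^{d(\alpha)}\setminus\{\alpha\}=\{\mu_1,\ldots,\mu_p\}$ yields $C\subseteq\bigsqcup_{i=1}^p Z(\mu_i)$. Aperiodicity forces some $n_j\ge 2$ (otherwise $\Lambda^\infty$ would be a singleton and every pair of same-source paths would be equivalent), so I can choose $n\in\bN^k$ large enough that $|\Lambda^n|\ge p$ and pick distinct $\gamma_1,\ldots,\gamma_p\in\Lambda^n$. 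Both $\{\beta\gamma_i\}_{i=1}^p$ and $\{\mu_i\}_{i=1}^p$ are then orthogonal, so Lemma~\ref{L:munu} yields a unitary $U\in\Lht$ extending $\sum_{i=1}^p s_{\beta\gamma_i}s_{\mu_i}^*$. The action $\phi(U)$ then sends each $Z(\mu_i)$ onto $Z(\beta\gamma_i)\subseteq Z(\beta)\subseteq V$, whence $\phi(U)\cdot C\subseteq V$.

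The main conceptual point is having enough room inside $\Lambda$ to place $p$ disjoint targets $Z(\beta\gamma_i)$ inside a single cylinder $Z(\beta)$; this is exactly where the single-vertex and aperiodicity hypotheses are used. Once a suitable $n$ is fixed, the remainder is a routine application of Lemma~\ref{L:munu} together with the explicit formula for the left regular representation. The degenerate case $n_1=\cdots=n_k=1$, excluded by aperiodicity, would make $\Lambda^\infty$ a single point and the statement vacuous.
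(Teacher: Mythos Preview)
Your proof is correct and follows essentially the same strategy as the paper's: verify compactness, then use Lemma~\ref{L:munu} to establish both minimality and extreme proximality. Your handling of the general closed set in the extreme-proximality step is in fact slightly cleaner than the paper's: you cover $C$ by the clopen set $\Lambda^\infty\setminus Z(\alpha)=\bigsqcup_i Z(\mu_i)$, whereas the paper writes ``a general closed subset $Y$, say $Y=\bigsqcup Z(\mu_i)$'', which strictly speaking presumes $Y$ is clopen; your formulation makes the reduction explicit. The observation that aperiodicity forces some $n_j\ge 2$ is also a useful clarification that the paper leaves implicit.
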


\begin{proof}
Since $\Lambda$ is aperiodic, it follows from \cite{DY09} and the beginning of Subsection \ref{SS:simpD} that $\Lambda^\infty$ is compact as it is a Cantor space. 

To see the minimality of $\phi$, fix $x\in \Lambda^\infty$ and take $y\in \Lambda^\infty$. Let $n\in \bN$ big enough. Consider $n\textbf{1}=(n,\ldots, n)\in \bN^k$.
Consider the pair $(y(0,n\textbf{1}),  x(0,n\textbf{1}))$. It follows from Lemma \ref{L:munu} that there is $g\in \Lht$ such that $g\cdot  x(0,n\textbf{1})= y(0,n\textbf{1})$. 
This shows that $g\cdot x$ and $y$ are close enough. Therefore $\phi$ is minimal. 

It remains to show that $\phi$ is extremely proximal. For this, we show that each $Z(\mu)$ is compressible. Let $x\in \Lambda^\infty$ and write $x=\lambda y$ with $d(\mu)=d(\lambda)$. Let $N$ be a neighbourhood of $x$. WLOG we assume that $N=Z(\lambda\alpha)$ for some $\alpha\in\Lambda$ (that is $x=\lambda\alpha z$ for some $z\in \Lambda^\infty$). Let $g=s_{\lambda\alpha} s_\mu^*+h\in \Lht$ obtained from Lemma \ref{L:munu}. Then $g\cdot Z(\mu)\subseteq N$. Thus $Z(\mu)$ is compressible. 

For a general closed subset $Y\subsetneq\Lambda^\infty$, say $Y=\bigsqcup_{i=1}^n Z(\mu_i)$. Let $x\in \Lambda^\infty$ and write $x=\lambda y$ with $d(\mu)=d(\lambda)$. Let $N$ be a neighbourhood of $x$. WLOG we assume that $N=Z(\lambda\alpha)$ for some $\alpha\in\Lambda$. Then there are $\nu_i$ ($1\le i\le n$) of the same degree which are mutually orthogonal such that $\bigsqcup_{i=1}^n Z(\lambda\alpha\nu_i)\subseteq Z(\lambda\alpha)$. By Lemma \ref{L:munu} there is $g\in \Lht$ such that $g\cdot Y\subseteq N$, as required. 

 Therefore the action is extremely proximal. 
\end{proof}

Similar to the proof of \cite[Proposition 4.12]{LBMB18} one has 

\begin{thm}
\label{T:staURS}
Let $\Lambda$ be an aperiodic single-vertex $k$-graph. Then the action $\phi$ of $\Lht$ on $\Lambda^\infty$ has Hausdorff germs. 
So its stabilizer URS is given by $\S_{\Lht}(\Lambda^\infty)=\{(\Lht)_x^0: x\in \Lambda^\infty\}$.
\end{thm}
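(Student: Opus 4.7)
The plan is to first verify that the action $\phi:\Lht\curvearrowright\Lambda^\infty$ has Hausdorff germs, and then to transport the argument of \cite[Proposition~4.12]{LBMB18} to obtain the description of the stabilizer URS.

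For Hausdorff germs it suffices to show that for every $U\in\Lht$ the interior of $\fix(\phi(U))$ is clopen in $\Lambda^\infty$. Take a representative $U=\sum_{i=1}^N s_{\alpha_i}s_{\beta_i}^*\in\Lht$. By Lemma~\ref{L:U}, both $\{\alpha_i\}_{i=1}^N$ and $\{\beta_i\}_{i=1}^N$ are complete orthogonal families, so $\{Z(\beta_i)\}_{i=1}^N$ partitions $\Lambda^\infty$. I would analyze $\fix(\phi(U))$ piece by piece: for $x=\beta_i y\in Z(\beta_i)$, the definition of $\phi$ in Observation~\ref{O:Lfaithful} gives $\phi(U)\cdot x=\alpha_i y$. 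If $\alpha_i=\beta_i$, the whole cylinder $Z(\beta_i)$ is pointwise fixed. If instead $\alpha_i\ne\beta_i$, I claim $\fix(\phi(U))\cap Z(\beta_i)$ has empty interior: were some $Z(\beta_i\gamma)$ fixed pointwise, then $\alpha_i\gamma y=\beta_i\gamma y$ for every admissible $y$, i.e.\ $\alpha_i\gamma\sim\beta_i\gamma$; aperiodicity of the single-vertex $\Lambda$ forces $\sim$ to be trivial on $\Lambda$, giving $\alpha_i\gamma=\beta_i\gamma$ and hence $\alpha_i=\beta_i$ by left cancellation, contrary to assumption. Consequently the interior of $\fix(\phi(U))$ equals the clopen set $\bigsqcup_{i:\alpha_i=\beta_i}Z(\beta_i)$, establishing Hausdorff germs.

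With Hausdorff germs in place and $\phi$ known to be an extreme boundary action by Lemma~\ref{L:extreb}, the argument of \cite[Proposition~4.12]{LBMB18} applies essentially verbatim: Hausdorff germs ensure that $(\Lht)_x^0$ already captures the germ-trivial part of the stabilizer at each $x$; minimality and extreme proximality of $\phi$ show that $\{(\Lht)_x^0:x\in\Lambda^\infty\}$ is $\Lht$-invariant and forms a single minimal closed subset of $\Sub(\Lht)$ in the Chabauty topology. Hence $\S_{\Lht}(\Lambda^\infty)=\{(\Lht)_x^0:x\in\Lambda^\infty\}$.

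The main obstacle I anticipate is the empty-interior step in the case $\alpha_i\ne\beta_i$: its proof pivots on the precise formulation of $\sim$-aperiodicity for single-vertex $k$-graphs, and one must take care that any Cuntz--Krieger expansion used to put $U$ into a convenient canonical form does not disturb the dichotomy between ``$\alpha_i=\beta_i$'' and ``$\alpha_i\ne\beta_i$'' that drives the clopen-ness.
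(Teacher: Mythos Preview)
Your proposal is correct and follows essentially the same route as the paper: both arguments partition $\Lambda^\infty$ by the $Z(\beta_i)$, use aperiodicity to show that a summand with $\alpha_i\ne\beta_i$ contributes no interior to $\fix(\phi(U))$, conclude that the interior of $\fix(\phi(U))$ equals the clopen set $\bigsqcup_{\alpha_i=\beta_i}Z(\beta_i)$ (whence Hausdorff germs), and then combine Lemma~\ref{L:extreb} with the relevant result from \cite{LBMB18}. Two cosmetic remarks: the cancellation you invoke for $\alpha_i\gamma=\beta_i\gamma\Rightarrow\alpha_i=\beta_i$ is \emph{right} cancellation (it follows from the degree functor plus unique factorization), and the paper closes by citing \cite[Proposition~2.10]{LBMB18} rather than re-running the Proposition~4.12 argument, but the content is the same.
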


\begin{proof}
Let $x\in\Lambda^\infty$ and $g=\sum_{i=1}^n s_{\alpha_i}s_{\beta_i}^*\in \Lht$ of the reduced form. Suppose that $g\in (\Lht)_x$. Since $\Lambda$ is aperiodic, we have either $\alpha_i=\beta_i$, or 
$(\alpha_i, \beta_i)$ is not an equivalent pair.
So if $\fix(g)$ contains a neighbourhood of $x$, then there is $1\le i\le n$ such that $\alpha_i=\beta_i$ and $x\in Z(\alpha_i)$.  
Hence the interior of $\fix(g)$ is the (disjoint) union of $Z(\alpha)$ where $s_{\alpha} s_\alpha^*$ is a summand of $g$.
Now suppose that $\fix(g)$ does not contain a neighbourhood of $x$, then there is $1\le i\le n$ such that $x\in Z(\beta_i)\cap Z(\alpha_i)$ but $\alpha_i\ne \beta_i$.  It now easily follows that if $\fix(g)$ does not contain a neighbourhood of $x$, then $x$ is not an accumulation point of the interior of $\fix(g)$. Therefore, the action has Hausdorff germs. The assertion of the lemma now follows from Lemma \ref{L:extreb} and \cite[Proposition 2.10]{LBMB18}. 
\end{proof}

\subsection{The C*-simplicity of $\Lht$} 

Let $\Lambda$ be an aperiodic single-vertex $k$-graph again. In this subsection, we will show that $\Lht$ is C*-simple (i.e., its reduced C*-algebra is simple). Recall from Observation
\ref{O:Lfaithful} that $\Lht$ acts on $\Lambda^\infty$ as homeomorphisms in our setting. 

As in \cite{Nek19}, for $\mu\in\Lambda$, define $I_\mu: \Lht\to \Lht$ as follows: 
\[
I_\mu(A)=s_\mu  A s_\mu^*+I-s_\mu s_\mu^*=s_\mu  A s_\mu^*+P_\mu^\perp\qforal A\in \Lht.
\]

\begin{lem}
For any $\mu \in \Lambda$, one has $\Lht\cong I_\mu(\Lht)$. 
\end{lem}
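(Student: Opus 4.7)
The plan is to verify directly that the map $I_\mu$ is a well-defined injective group homomorphism from $\Lht$ onto $I_\mu(\Lht)$, which then gives the isomorphism for free. Crucially, throughout this subsection $\Lambda$ has a single vertex $v$, so $s_\mu^* s_\mu = s_{s(\mu)} = s_v = I$ and $I = \sum_{\gamma\in\Lambda^{d(\mu)}} s_\gamma s_\gamma^*$, which makes $P_\mu^\perp$ a sum of range projections orthogonal to the range of $s_\mu$.

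First I would check that $I_\mu$ does land in $\Lht$. For $A=\sum_{i=1}^n s_{\alpha_i}s_{\beta_i}^*\in \Lht$, one computes
\[
I_\mu(A)=\sum_{i=1}^n s_{\mu\alpha_i}s_{\mu\beta_i}^*+\sum_{\gamma\in\Lambda^{d(\mu)}\setminus\{\mu\}} s_\gamma s_\gamma^*,
\]
which has the correct form. Since $\sum_i s_{\alpha_i}s_{\alpha_i}^*=I$, one has $\sum_i s_{\mu\alpha_i}s_{\mu\alpha_i}^*=s_\mu s_\mu^*=P_\mu$, and adding $P_\mu^\perp$ yields $I$; the identical argument on the $\beta_i$ side lets Lemma \ref{L:U} conclude that $I_\mu(A)$ is unitary in $\O_\Lambda$.

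Next I would verify that $I_\mu$ is multiplicative. The key identities are $P_\mu^\perp s_\mu=0=s_\mu^* P_\mu^\perp$ (each a one-line cancellation using $s_\mu^* s_\mu=I$) and $(P_\mu^\perp)^2=P_\mu^\perp$. Then
\begin{align*}
I_\mu(A)I_\mu(B)
&=(s_\mu A s_\mu^*+P_\mu^\perp)(s_\mu B s_\mu^*+P_\mu^\perp)\\
&=s_\mu A(s_\mu^* s_\mu) B s_\mu^*+s_\mu A(s_\mu^* P_\mu^\perp)+(P_\mu^\perp s_\mu)B s_\mu^*+P_\mu^\perp\\
&=s_\mu AB s_\mu^*+P_\mu^\perp=I_\mu(AB).
\end{align*}
For injectivity, if $I_\mu(A)=I_\mu(B)$, then $s_\mu A s_\mu^*=s_\mu B s_\mu^*$, and compressing by $s_\mu^*$ on the left and $s_\mu$ on the right using $s_\mu^* s_\mu=I$ yields $A=B$. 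Since surjectivity onto $I_\mu(\Lht)$ is tautological, $I_\mu$ is a group isomorphism $\Lht\to I_\mu(\Lht)$.

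No step constitutes a genuine obstacle; the only subtlety is that all three essential cancellations ($s_\mu^* s_\mu=I$, $s_\mu^* P_\mu^\perp=0$, $P_\mu^\perp s_\mu=0$) lean on the single-vertex hypothesis, so I would flag that explicitly rather than silently invoke it. The rest is bookkeeping with the Cuntz–Krieger relations.
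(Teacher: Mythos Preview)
Your proof is correct and follows essentially the same approach as the paper: both argue that $I_\mu$ is an injective group homomorphism using the single-vertex relation $s_\mu^* s_\mu = I$, after which the isomorphism onto the image is immediate. Your version is more explicit in verifying that $I_\mu(A)$ actually lies in $\Lht$ and in writing out the multiplicativity computation, whereas the paper's proof is terser (and contains some apparent leftover notation, e.g.\ ``$\varphi$'' and ``$V_\mu$'') and spends its second half on a surjectivity argument that, as you correctly observe, is tautological for a map onto its image.
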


\begin{proof}
Using the fact that $s_\mu^* s_\mu=I$ for all $\mu\in\Lambda$, one can easily check that $\varphi$ is an injective group homomorphism.
To see its surjectivity, let $A\in V_\mu$. If $I_\mu(B)=s_\mu B s_\mu^*+I-s_\mu s_\mu^*=A$, then this forces $B=s_\mu^* A s_\mu$. 
Using the fact that $A P_\mu = P_\mu A P_\mu$, one can verify that $B$ is a unitary (so in $\Lht$) and $I_\mu (B)=A$. 
\end{proof}

For a subset $X\subseteq \Lambda^\infty$, by $\R_{\Lht}(X)$ we denote the \textit{rigid stabilizer of $X$}, i.e., $\R_{\Lht}(X)=\{U\in \Lht: Ux=x \text{ for every }x\in \Lambda^\infty\setminus X\}$. 

\begin{lem}
\label{L:Lhtmu}
$I_\mu(\Lht) = \R_{\Lht}(Z(\mu))$. 
\end{lem}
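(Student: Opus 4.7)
The plan is to verify both inclusions by direct path-level calculations, exploiting that $\Lambda$ is single-vertex (so $s_\mu^* s_\mu = I$) and aperiodic (so $\alpha \sim \beta$ forces $\alpha = \beta$ by \cite{DY09, Yang15}).

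For the inclusion $I_\mu(\Lht) \subseteq \R_{\Lht}(Z(\mu))$, I will take $A \in \Lht$ and $x \in \Lambda^\infty \setminus Z(\mu)$ and compute $\phi(I_\mu(A)) \cdot x$ directly via the left regular representation. Since $x \notin Z(\mu)$ one has $L(s_\mu^*) \chi_x = 0$, so the summand $s_\mu A s_\mu^*$ contributes nothing, while $L(P_\mu^\perp) \chi_x = \chi_x$. Hence $I_\mu(A) \cdot x = x$, placing $I_\mu(A)$ in $\R_{\Lht}(Z(\mu))$.

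For the reverse inclusion, I will take $U = \sum_{i=1}^n s_{\alpha_i} s_{\beta_i}^* \in \R_{\Lht}(Z(\mu))$ and first \emph{refine} the expression, using $s_\gamma = \sum_{\omega \in \Lambda^{p}} s_{\gamma\omega} s_\omega^*$, to reduce to the case where $d(\alpha_i), d(\beta_i) \geq d(\mu)$ for all $i$. With this normalization each $\beta_i$ either has $\mu$ as a prefix (so $Z(\beta_i) \subseteq Z(\mu)$) or does not (so $Z(\beta_i) \cap Z(\mu) = \emptyset$); partition the indices accordingly into $I_1 \sqcup I_2$. For $i \in I_2$ any $x = \beta_i y$ lies outside $Z(\mu)$, so $Ux = x$ forces $\alpha_i y = \beta_i y$ for all $y$, hence $\alpha_i \sim \beta_i$, and aperiodicity upgrades this to $\alpha_i = \beta_i$. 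Since $\{Z(\beta_i)\}_{i \in I_2}$ partitions $\Lambda^\infty \setminus Z(\mu)$, the $I_2$ contribution is exactly $\sum_{i \in I_2} s_{\beta_i} s_{\beta_i}^* = P_\mu^\perp$.

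For $i \in I_1$ write $\beta_i = \mu \beta_i'$. Because $U$ is a bijection of $\Lambda^\infty$ fixing $\Lambda^\infty \setminus Z(\mu)$ pointwise, it preserves $Z(\mu)$, so $U$ sends $Z(\beta_i)$ into $Z(\mu)$; this forces $\alpha_i = \mu \alpha_i'$. Setting $B := \sum_{i \in I_1} s_{\alpha_i'} s_{\beta_i'}^*$, a direct computation gives
\[
I_\mu(B) = s_\mu B s_\mu^* + P_\mu^\perp = \sum_{i \in I_1} s_{\alpha_i} s_{\beta_i}^* + P_\mu^\perp = U.
\]
It remains to confirm $B \in \Lht$, which by Lemma~\ref{L:U} reduces to the two defect-free identities $\sum_{i \in I_1} s_{\alpha_i'} s_{\alpha_i'}^* = I = \sum_{i \in I_1} s_{\beta_i'} s_{\beta_i'}^*$; these follow from $\sum_{i \in I_1} s_{\alpha_i} s_{\alpha_i}^* = P_\mu$ (a consequence of the unitarity of $U$ together with the $I_2$ computation above) after multiplying by $s_\mu^*$ on the left and $s_\mu$ on the right and using $s_\mu^* s_\mu = I$.

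The main technical nuisance is the refinement step: after refining, one needs that the new expression of $U$ still has orthogonal $\alpha$-families and $\beta$-families, and that the dichotomy between $I_1$ and $I_2$ behaves cleanly; the conceptual crux is the use of aperiodicity to upgrade $\alpha_i \sim \beta_i$ to equality, which is what makes the $I_2$ sum collapse exactly to $P_\mu^\perp$.
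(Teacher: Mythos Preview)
Your argument is correct, but the paper's proof of the reverse inclusion takes a considerably shorter, operator-algebraic route rather than a combinatorial one. Given $U\in\R_{\Lht}(Z(\mu))$, the paper first observes that $UP_\mu^\perp = P_\mu^\perp U P_\mu^\perp$ (and hence, since $U^*$ also lies in the rigid stabilizer, that $UP_\mu = P_\mu U$ and in fact $UP_\mu^\perp = P_\mu^\perp$). It then simply sets $f:=s_\mu^* U s_\mu$, checks $f\in\Lht$, and computes
\[
I_\mu(f)=s_\mu s_\mu^* U s_\mu s_\mu^* + P_\mu^\perp = P_\mu U P_\mu + P_\mu^\perp = UP_\mu + UP_\mu^\perp = U.
\]
No refinement of the expression for $U$, no partition into $I_1\sqcup I_2$, and no explicit invocation of aperiodicity to upgrade $\alpha_i\sim\beta_i$ to equality is needed. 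Your approach has the virtue of being completely explicit at the level of paths and of making transparent exactly where aperiodicity enters (in collapsing the $I_2$ sum to $P_\mu^\perp$); the paper's approach hides this inside the identity $UP_\mu^\perp=P_\mu^\perp$ in $\O_\Lambda$, which ultimately relies on the faithfulness of the induced action (Observation~\ref{O:Lfaithful}) and hence on aperiodicity as well, but packages it more compactly. Both produce the same preimage: your $B$ is exactly the paper's $f=s_\mu^* U s_\mu$ after your refinement.
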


\begin{proof}
To show $I_\mu(\Lht) \subseteq \R_{\Lht}(Z(\mu))$, given any $U\in \Lht$, one can see that 
$I_\mu(U)s_\nu=s_\nu$ for all $\nu \in \Lambda$ with $d(\mu)=d(\nu)$ but $\mu \ne \nu$. Thus $I_\mu(U)x=x$ for all $x\in \Lambda^\infty\setminus Z(\mu)$,
and so $I_\mu(U)\in \R_{\Lht}(Z(\mu))$. 

For $\R_{\Lht}(Z(\mu))\subseteq I_\mu(\Lht)$, let $U\in \R_{\Lht}(Z(\mu))$. Then $UP_\mu = P_\mu U$, 
which follows from that $UP_\mu^\perp = P_\mu^\perp U P_\mu^\perp$ and that $\R_{\Lht}(Z(\mu))$ is a group. 
Let $f:=s_\mu^* U s_\mu$. Then it is easy to check that $f\in \Lht$ and
Then $I_\mu(f)= s_\mu(s_\mu^*Us_\mu)s_\mu^*+P_\mu^\perp =P_\mu U P_\mu+P_\mu^\perp = U P_\mu+U P_\mu^\perp =U$ as $U P_\mu^\perp = P_\mu^\perp$. 
Therefore $\R_{\Lht}(Z(\mu))\subseteq I_\mu(\Lht)$. 
\end{proof}

It is easy to check that $s_\mu^* s_\nu=0$ if and only if $Z(\mu)\cap Z(\nu)=\mt$ as $\mu$ and $\nu$ have no common extensions. Using this, one can easily verify the following:
If $\mu,\nu\in\Lambda$ such that $s_\mu^* s_\nu=0$, then $I_\mu(\Lht)\cap I_\nu(\Lht)=\{I\}$, and 
\[
I_\mu(A)I_\nu(B)=I_\nu(B)I_\mu(A)=s_\mu A s_\mu^*+s_\nu B s_\nu^*+I-(P_\mu+P_\nu)\qforal A, B\in \Lht. 
\]
Clearly, $I_\mu(A)I_\nu(B) (I-P_\mu-P_\nu)=I-P_\mu-P_\nu$. So $I_\mu(\Lht) I_\nu(\Lht)\subseteq \R_{\Lht}(Z(\mu)\sqcup Z(\nu))$.
In general, we have 

\begin{lem}
If  $\{\mu_i\in \Lambda: 1\le i\le n\}$ be an orthogonal family. Then 
\[
\prod_{i=1}^n I_{\mu_i}(\Lht)\subseteq \R_{\Lht}\left(\bigsqcup_{i=1}^n Z(\mu_i)\right).
\] 
\end{lem}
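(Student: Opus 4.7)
The plan is to reduce the statement to Lemma \ref{L:Lhtmu} together with a monotonicity property of rigid stabilizers; no induction on $n$ or case analysis on the form of the elements is really needed. Indeed, observe that the assignment $X \mapsto \R_{\Lht}(X)$ is monotone: if $Y \subseteq X \subseteq \Lambda^\infty$, then every $U \in \Lht$ that fixes $\Lambda^\infty \setminus Y$ pointwise a fortiori fixes $\Lambda^\infty \setminus X$ pointwise, so $\R_{\Lht}(Y) \subseteq \R_{\Lht}(X)$. Moreover, $\R_{\Lht}(X)$ is clearly a subgroup of $\Lht$: it contains the identity, and if $U,V$ fix every point outside $X$, then so do $UV$ and $U^{-1}$.

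With these two facts in hand, the proof is immediate. By Lemma \ref{L:Lhtmu}, for each $i$ we have $I_{\mu_i}(\Lht) = \R_{\Lht}(Z(\mu_i))$. Since $Z(\mu_i) \subseteq \bigsqcup_{j=1}^n Z(\mu_j)$, the monotonicity observation gives
\[
I_{\mu_i}(\Lht) \;=\; \R_{\Lht}(Z(\mu_i)) \;\subseteq\; \R_{\Lht}\!\left(\bigsqcup_{j=1}^n Z(\mu_j)\right)
\qquad (1 \le i \le n).
\]
Because the right-hand side is a subgroup of $\Lht$, any product $A_1 A_2 \cdots A_n$ with $A_i \in I_{\mu_i}(\Lht)$ lies in $\R_{\Lht}(\bigsqcup_{j=1}^n Z(\mu_j))$, which is exactly the desired containment.

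There is essentially no obstacle here; the orthogonality hypothesis on $\{\mu_i\}$ is not even used for the containment itself (it is of course essential if one wants the product decomposition to behave well, e.g.\ commutativity of the factors and the explicit description shown for $n=2$ in the paragraph preceding the lemma). If one prefers a more self-contained presentation, one can replace the appeal to monotonicity by the direct verification: given $x \in \Lambda^\infty \setminus \bigsqcup_{j=1}^n Z(\mu_j)$, we have $x \notin Z(\mu_i)$ for every $i$, so each $A_i$ fixes $x$ by Lemma \ref{L:Lhtmu}, and therefore $(A_1 \cdots A_n)\cdot x = x$. Either way the argument is a short line.
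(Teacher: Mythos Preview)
Your proof is correct. The paper does not spell out a proof of this lemma; it only treats the case $n=2$ just before the statement by a direct computation, obtaining the explicit form $I_\mu(A)I_\nu(B)=s_\mu A s_\mu^*+s_\nu B s_\nu^*+I-(P_\mu+P_\nu)$ and observing that this element annihilates $P_\mu^\perp \cap P_\nu^\perp$, then says ``In general, we have'' the lemma. So the intended argument is the obvious extension of that computation to $n$ orthogonal paths.

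Your route is somewhat different and arguably cleaner: instead of redoing the algebraic computation, you invoke Lemma~\ref{L:Lhtmu} once for each $i$, then use only the monotonicity and subgroup properties of rigid stabilizers. This makes the argument independent of the explicit product formula and, as you note, shows that orthogonality is not needed for the bare containment (it is used in the paper only to ensure the $Z(\mu_i)$ are disjoint and the factors $I_{\mu_i}(\Lht)$ commute, so that $\prod_i I_{\mu_i}(\Lht)$ is a group). The paper's computational approach, on the other hand, yields the extra information about the explicit shape of a general element of $\prod_i I_{\mu_i}(\Lht)$, which is not visible from your argument but is also not needed for what follows.
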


Notice that $I_{\mu_i}(\Lht)$'s on the left hand side of the above identity are commutative, and so the notation $\prod_{i=1}^n I_{\mu_i}(\Lht)$ makes sense. 

Clearly, $I_{\mu_i}(\Lht)\le \prod_{i=1}^n I_{\mu_i}(\Lht)$. But $I_{\mu_i}(\Lht)\cong \Lht$ and so $I_{\mu_i}(\Lht)$ is non-amenable 
(as $\Lht$ contains the Thompson group $V_n$). Thus $\prod_{i=1}^n I_{\mu_i}(\Lht)$ is non-amenable, and hence $\R_{\Lht}\left(\bigsqcup_{i=1}^n Z(\mu_i)\right)$ is 
non-amenable. Therefore, by the first part of \cite[Theorem 3.7]{LBMB18} and \cite[Corollary 4.3]{BKKO17} we have the following. 

\begin{thm}
\label{T:LhtC*S}
Let $\Lambda$ be an aperiodic single-vertex $k$-graph. Then $\Lht$ is C*-simple and so has the unique trace property. 
\end{thm}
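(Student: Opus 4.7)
The plan is to deduce the C*-simplicity of $\Lht$ from the dynamics of its canonical action on $\Lambda^\infty$ by invoking the Le Boudec--Matte Bon dynamical criterion, after which the unique trace property will follow formally. The first input is Lemma \ref{L:extreb}, which provides that $\phi : \Lht \curvearrowright \Lambda^\infty$ is a faithful extreme boundary action on the Cantor space $\Lambda^\infty$; this supplies exactly the dynamical hypothesis required by \cite[Theorem 3.7]{LBMB18}.

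The remaining hypothesis is the non-amenability of the rigid stabilizer $\R_{\Lht}(W)$ for every non-empty open set $W \subseteq \Lambda^\infty$. Since the cylinders $\{Z(\mu) : \mu \in \Lambda\}$ form a basis of the topology, any such $W$ contains some $Z(\mu)$, so Lemma \ref{L:Lhtmu} together with the isomorphism $I_\mu(\Lht) \cong \Lht$ yields $\R_{\Lht}(W) \supseteq I_\mu(\Lht) \cong \Lht$. Hence it suffices to verify that $\Lht$ itself is non-amenable. The aperiodicity hypothesis forces some $n_i \geq 2$ (otherwise $\Lambda^\infty$ is a singleton and the action cannot be extremely proximal), and the Cuntz subalgebra of $\O_\Lambda$ generated by the $i$-th coloured edges then embeds the classical Higman--Thompson group $V_{n_i}$ as a subgroup of $\Lht$, cf.\ Example \ref{Ex:nV}. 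Since $V_{n_i}$ with $n_i \geq 2$ contains a non-abelian free subgroup, the non-amenability of $\Lht$ follows.

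With both hypotheses of \cite[Theorem 3.7]{LBMB18} in place, the C*-simplicity of $\Lht$ is immediate, and \cite[Corollary 4.3]{BKKO17} then upgrades this to the unique trace property. The substance of the argument lies entirely in the rigid-stabilizer verification; the main potential obstacle would be identifying a concrete non-amenable subgroup inside each $\R_{\Lht}(W)$, but the isomorphism $I_\mu(\Lht) \cong \Lht$ already established makes this step essentially cosmetic by reducing it to the non-amenability of the ambient group.
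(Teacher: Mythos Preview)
Your proposal is correct and follows essentially the same route as the paper: Lemma~\ref{L:extreb} for the extreme boundary action, Lemma~\ref{L:Lhtmu} together with $I_\mu(\Lht)\cong\Lht$ to see that rigid stabilizers contain a copy of $\Lht$, non-amenability via an embedded Thompson group, and then \cite[Theorem~3.7]{LBMB18} plus \cite[Corollary~4.3]{BKKO17}. The paper phrases the rigid-stabilizer step via products over orthogonal families, but your reduction to a single cylinder $Z(\mu)\subseteq W$ is equivalent and slightly cleaner.
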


\begin{rem}
Theorem \ref{T:LhtC*S} can be deduced directly from the second part of \cite[Theorem 3.7]{LBMB18} and \cite[Theorem 4.5 (i)]{LBMB18}.
\end{rem}

We finish the note by the following characterizations of the C*-simplicity of $\Lht$. 

\begin{cor} Let $\Lambda$ be a single-vertex $k$-graph. Then the following statements are equivalent. 
\begin{itemize}
\item[(a)] $\Lambda$ is aperiodic. 
\item[(b)] $\O_\Lambda$ is simple. 
\item[(c)] $\Lht$ is C*-simple. 
\end{itemize}
\end{cor}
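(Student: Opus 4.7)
The plan is to close a cycle of implications $(a)\Rightarrow (b)\Rightarrow (c)\Rightarrow (a)$, using mostly results already in the paper together with one external fact.

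First, $(a)\Leftrightarrow(b)$ is essentially the single-vertex case of the standard simplicity criterion for higher rank graph C*-algebras: a single-vertex $k$-graph $\Lambda$ is cofinal automatically, so $\O_\Lambda$ is simple if and only if $\Lambda$ is aperiodic. I would just cite this fact (see \cite{DY09,KP00,Rae05}) rather than reprove it.

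For $(a)\Rightarrow (c)$, there is nothing to do: this is exactly Theorem \ref{T:LhtC*S}.

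The main step is the contrapositive $(a)\Rightarrow$ (not $(c)$): assume $\Lambda$ is periodic. Because $\Lambda$ has a single vertex, the hypothesis $\Lambda^0_{\Per}=\Lambda^0$ of Remark \ref{R:N} is trivially satisfied, and that remark gives a nontrivial abelian normal subgroup
\[
\N=\ker(\pi\circ\Psi)\trianglelefteq \Lht.
\]
A C*-simple group admits no nontrivial amenable normal subgroup (this is a general fact; see, e.g., \cite[Theorem 1.4]{BKKO17}). Since $\N$ is abelian and nontrivial, $\Lht$ cannot be C*-simple, which contradicts $(c)$. Hence $(c)\Rightarrow(a)$, closing the cycle.

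I do not expect any real obstacle: $(a)\Leftrightarrow(b)$ is classical for single-vertex $k$-graphs, $(a)\Rightarrow(c)$ is already proved, and the only genuinely new input is the one-line observation that the normal subgroup $\N$ produced in Remark \ref{R:N} obstructs C*-simplicity. The one point that needs slight care is that the Remark \ref{R:N} dichotomy uses $\Lambda^0_{\Per}=\Lambda^0$, but in the single-vertex case this is automatic, so nothing extra is required.
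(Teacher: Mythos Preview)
Your proposal is correct and follows essentially the same approach as the paper: both argue $(a)\Leftrightarrow(b)$ by citing the known simplicity criterion, $(a)\Rightarrow(c)$ via Theorem~\ref{T:LhtC*S}, and $(c)\Rightarrow(a)$ by contrapositive using the nontrivial abelian normal subgroup $\N$ from Remark~\ref{R:N}. The only cosmetic issues are that your opening ``cycle'' $(a)\Rightarrow(b)\Rightarrow(c)\Rightarrow(a)$ is not actually the structure you implement (you never use $(b)\Rightarrow(c)$), and the phrase ``the contrapositive $(a)\Rightarrow$ (not $(c)$)'' should read ``not $(a)\Rightarrow$ not $(c)$''; the paper cites \cite{HLRS15} for $(a)\Leftrightarrow(b)$ rather than \cite{DY09,KP00,Rae05}, but this is immaterial.
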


\begin{proof}
(a)$\iff$(b): See \cite[Theorem 11.1]{HLRS15} (cf. \cite{Yang12}). 

(a)$\implies$(c): This is from Theorem \ref{T:LhtC*S}. 

(c)$\implies$(a): Since $\Lambda$ is single-vertex, obviously $\Lambda^0_{\Per}=\Lambda^0$. 
If $\Lambda$ is periodic, then $\Lht$ has a nontrivial abelian normal subgroup (see Remark \ref{R:N}). Thus $\Lht$ can not be C*-simple. 
\end{proof}

\end{document}